\documentclass{amsart}[12pt]
\usepackage{xypic}
\usepackage{eucal}

\usepackage{amsmath}
\usepackage{amstext}
\usepackage{amssymb}
\usepackage{amsthm}
\usepackage{amscd}

\usepackage{lscape}
\usepackage{longtable}

\usepackage{epsfig}
\input txdtools
\let\et=\etexdraw
\def\etexdraw{\drawbb\et}

\setlength{\textwidth}{5.5in} 

\newtheorem{teo}{Theorem}

\newtheorem{lemma}[teo]{Lemma}
\newtheorem{prop}[teo]{Proposition}
\newtheorem{corol}[teo]{Corollary}

\theoremstyle{definition}
\newtheorem{defi}[teo]{Definition}

\newtheorem{esem}[teo]{Example}

\theoremstyle{remark}
\newtheorem{oss}[teo]{Remark}

\DeclareMathOperator{\Ker}{Ker}
\DeclareMathOperator{\Coker}{Coker}
\DeclareMathOperator{\Imm}{Im}
\DeclareMathOperator{\HH}{H}

\DeclareMathOperator{\Ann}{Ann}
\DeclareMathOperator{\Spec}{Spec}
\DeclareMathOperator{\Supp}{Supp}
\DeclareMathOperator{\Nil}{Nil}

\DeclareMathOperator{\Hom}{Hom}
\DeclareMathOperator{\Ext}{Ext}

\DeclareMathOperator{\E}{E}
\DeclareMathOperator{\di}{d}
\DeclareMathOperator{\Mat}{Mat_{s,t}(A)}
\DeclareMathOperator{\HSL}{HSL}
\DeclareMathOperator{\Ass}{Ass}

\newcommand{\W}{W^{-1}}

\newcommand{\ER}{\E_R(\mathbb{K})}

\newcommand{\Z}{\mathbb{Z}}

\newcommand{\U}{\mathcal{U}}
\newcommand{\K}{\mathbb{K}}

\newcommand{\m}{\mathfrak{m}}
\newcommand{\p}{\mathfrak{p}}

\newcommand{\Hs} {\HH^{\di}_{\mathfrak{m} S}(S)}
\newcommand{\Hw}{\HH^{\di}_{\mathfrak{m} S}(\bar\omega)}
\newcommand{\Hsw}{\HH^{\di-1}_{\mathfrak{m}S}(S/\bar\omega)}
\newcommand{\Hrw}{\HH^{\di-1}_{\mathfrak{m}}(R/\omega)}
\newcommand{\Hir} {\HH^i_{\mathfrak{m} }(R)}
\newcommand{\Hdr} {\HH^{\di}_{\mathfrak{m} }(R)}

\begin{document}

\title
[On the Upper Semi-continuity of HSL Numbers]
{On the upper semi-continuity of HSL numbers}

\author{Serena Murru}
\address{Department of Pure Mathematics,
University of Sheffield, Hicks Building, Sheffield S3 7RH, United Kingdom}
\email{pmp11sm@sheffield.ac.uk}

%\subjclass{Primary 13A35, 13E10}

\begin{abstract}
Let $B$ be an affine Cohen-Macaulay algebra over a field of characteristic $p$.
For every prime ideal $\p\subset B$, let $H_\p$ denote $\HH^{\dim B_\p}_{\p B_\p}\left( \widehat{B_\p} \right)$.
Each such $H_\p$ is an Artinian module endowed with a natural Frobenius map $\Theta$ and if $\Nil(H_\p)$ denotes the set of all elements
in $H_\p$ killed by some power of $\Theta$ then a theorem by Hartshorne-Speiser and Lyubeznik shows that
there exists an $e\geq 0$ such that $\Theta^e \Nil(H_\p)=0$. The smallest such $e$ is the HSL-number of $H_\p$ which we denote $\HSL(H_\p)$.\\
The main theorem in this paper shows that for all $e>0$, the sets
$\{ \p\in\Spec B \,|\, \HSL(H_\p) < e \}$ are Zariski open, hence $\HSL$ is upper semi-continuous. An application of this result gives a global test exponent for the calculation of Frobenius closures of parameter ideals in Cohen-Macaulay rings. 
%This extends \cite[Prop. 4.8]{6} where M.~Hashimoto proves that if $B$ is $F$-finite Cohen-Macaulay then the $F$-injective locus of $B$ is a Zariski open set.
\end{abstract}

\maketitle

\section{Introduction}\label{section:introduction}
\emph{Throughout this paper every ring is assumed to be Noetherian, commutative, associative, with identity, and of prime characteristic $p$.}\\

Let $R$ be a ring and for every positive integer $e$ define the $e^{th}$-iterated Frobenius endomorphism $T^e\colon R\to R$  
to be the map $r\mapsto r^{p^e}$. For $e=1$, $R\to R$ is the natural Frobenius map on $R$. For any $R$-module $M$ we define $F_*^e M$ to be the Abelian group $M$ with $R$-module structure given by
$r \cdot m = T^e(r) m = r^{p^e} m$ for all $r\in R$ and $m\in M$. \\
We can extend this construction to obtain the Frobenius functor $F^e_R$ from $R$-modules to $R$-modules as follows. 
For any $R$-module $M$, we consider the $F_*^e R$-module $F_*^e R \otimes_R M$ and after identifying the rings $R$ and $F_*R$,
we may regard $F_*^e R \otimes_R M$ as an $R$-module and denote it $F^e_R(M)$ or just $F^e(M)$ when $R$ is understood. %The map $M\to F^e(M)$ which sends $m\mapsto F_R^ef(m)$ is $R$-linear, see [].\\
The functor $F^e_R(-)$ is exact when $R$ is regular, cf. \cite[Corollary 8.2.8]{2}, and for any matrix $C$ with entries in $R$, 
$F^e_R(\Coker C)$ is the cokernel of the matrix $C^{[p^e]}$ obtained from $C$ by raising its entries to the $p^e$th power, cf. \cite{6}.
For any  $R$-module $M$ an additive map 
$\varphi: M \rightarrow M$ is an \emph{$e^{th}$-Frobenius map} 
if it satisfies $\varphi(r m)=r^{p^e} \varphi(m)$ for all $r\in R$ and $m\in M$. 
Note that there is a bijective correspondence between $\Hom_R(M,F_*M)$ and the Frobenius maps on $M$.
%%%%%%%%%%%%%%%%%%%%%%%%%%%%%%%
%IN MT THESIS
%indeed let $\varphi \in \Hom_R(M,F_*M)$. Since $F_*M\cong M$ as abelian groups, we can think of $\varphi$ as an $R$-linear map $M\to M$. So $\varphi(rm)=r\varphi(m)$  and $\varphi(m)=F_*a$ for some $a$. Consequently $\varphi(rm)=F_*r^pa=r^pa$ which is a Frobenius map. Viceversa, let $\psi\colon M\to M$ be a Frobenius map. Identify $M$ and $F_*M$ as abelian groups so that $M\to M\cong F_*M$; we have $\psi(rm)=r^p\psi(m)=F_*r^p\psi(m)=rF_*\psi(m)=r\cdot\psi(m)$.\\
%%%%%%%%%%%%%%%%%%%%%%%%%%%%%%%%

For every $e\geq 0$ let $\mathcal{F}^e(M)$ be the set of all Frobenius maps on $M$. Each $\mathcal{F}^e(M)$ is an $R$-module: for all $\varphi\in \mathcal{F}^e(M)$ and $r\in R$ the map $r\varphi$ defined as $(r\varphi)(m)=r\varphi(m)$ is in $\mathcal{F}^e(M)$ for all $m\in M$.
If $\varphi\in\mathcal{F}^e(M)$ we can define for $i\geq 0$
the $R$-submodules $M_i=\{ m\in M \,|\, \varphi^i (m) = 0\}$.
We define the \emph{submodule of nilpotent elements in $M$} as
$$\Nil(M)=\cup_{i\geq 0} M_i.$$

\begin{teo}[cf.~Proposition 1.11 in \cite{5} and Proposition 4.4 in \cite{9}] \label{incl}
If $(R,\m)$ is a complete regular ring, $M$ is an Artinian $R$-module  and $\varphi\in\mathcal{F}^e(M)$
then the ascending sequence $\{ M_i \}_{i\geq 0}$ above stabilises, i.e.,
there exists an $e\geq 0$ such that $\varphi^e (\Nil(M))=0$.
\end{teo}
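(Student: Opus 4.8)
The plan is to reduce the statement, via Matlis duality, to the ascending chain condition on a finitely generated module. Write $(-)^{\vee}=\Hom_R(-,\ER)$ for the Matlis dual; since $(R,\m)$ is complete local, $(-)^{\vee}$ is exact and restricts to an anti-equivalence between Artinian and finitely generated $R$-modules, with $(-)^{\vee\vee}\cong\mathrm{id}$. Thus $N:=M^{\vee}$ is a finitely generated $R$-module. Set $F:=F^{e}_R$, the $e^{th}$-Frobenius functor; since $R$ is regular, $F$ is exact.

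The first step is to encode $\Theta$ as linear data on $N$. Over any ring, giving an $e^{th}$-Frobenius map $\Theta\colon M\to M$ is the same as giving an $R$-linear map $\widetilde{\Theta}\colon F(M)\to M$: one sends $\widetilde{\Theta}$ to $m\mapsto\widetilde{\Theta}(1\otimes m)$ and $\Theta$ to $f\otimes m\mapsto f\,\Theta(m)$. The technical input I would have to establish is that, $R$ being regular, $F$ commutes with Matlis duality, i.e. there is a natural isomorphism $\bigl(F(L)\bigr)^{\vee}\cong F\bigl(L^{\vee}\bigr)$ for every Artinian (equivalently, every finitely generated) $R$-module $L$; this ultimately comes down to Kunz's flatness of Frobenius over regular rings, or to the bijectivity of the canonical Frobenius action on $\ER$. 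Granting it, dualising $\widetilde{\Theta}$ yields an $R$-linear map $\alpha\colon N\to F(N)$. By functoriality of $F$, the map $\Theta^{i}$ then corresponds to
\[
\alpha_{i}:=F^{i-1}(\alpha)\circ\cdots\circ F(\alpha)\circ\alpha\colon N\longrightarrow F^{i}(N),
\]
and chasing the correspondence shows $M_{i}^{\vee}\cong\ker(\alpha_{i})$ as $R$-modules, where $M_{i}=\ker(\Theta^{i})$.

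With this dictionary the argument is quick. From $\alpha_{i+1}=F^{i}(\alpha)\circ\alpha_{i}$ we get $\ker\alpha_{i}\subseteq\ker\alpha_{i+1}$, so $\{\ker\alpha_{i}\}_{i\geq 0}$ is an ascending chain of submodules of the finitely generated module $N$; since $R$ is Noetherian it stabilises, say $\ker\alpha_{e_{0}}=\ker\alpha_{e_{0}+1}=\cdots$. Hence the modules $M_{i}^{\vee}\cong\ker\alpha_{i}$ are isomorphic for all $i\geq e_{0}$. On the other hand the inclusions $M_{i}\hookrightarrow M_{i+1}$ dualise to surjections $M_{i+1}^{\vee}\twoheadrightarrow M_{i}^{\vee}$, and a surjection between two isomorphic finitely generated $R$-modules is an isomorphism; so these surjections are isomorphisms for $i\geq e_{0}$, and dualising back the inclusions $M_{i}\hookrightarrow M_{i+1}$ are isomorphisms for $i\geq e_{0}$. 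Therefore $\Nil(M)=\bigcup_{i\geq 0}M_{i}=M_{e_{0}}$, whence $\Theta^{e_{0}}\Nil(M)=\Theta^{e_{0}}(M_{e_{0}})=0$, which is the claim with $e=e_{0}$.

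The hard part is the second paragraph: building the correspondence between $(M,\Theta)$ and $(N,\alpha)$, in particular the isomorphisms $\bigl(F(L)\bigr)^{\vee}\cong F\bigl(L^{\vee}\bigr)$ and $M_{i}^{\vee}\cong\ker(\alpha_{i})$. This is precisely the point where regularity of $R$ is used in an essential way — if $R$ is not regular, $F$ need not be exact, the Matlis dual of $\Theta$ need not be represented by a single $R$-linear map $N\to F(N)$, and the conclusion may genuinely fail. Once the dictionary is available, the theorem is just the ascending chain condition for $N$ combined with the exactness of Matlis duality.
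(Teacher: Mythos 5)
The paper states Theorem \ref{incl} only as a citation to Hartshorne--Speiser and Lyubeznik and supplies no proof, so there is no internal argument to compare against; I will review your proposal on its own terms. Your overall strategy --- pass to the Matlis dual $N=M^{\vee}$, encode $\Theta$ as an $R$-linear map $\alpha\colon N\to F^{e}(N)$ via the isomorphism $(F^{e}(L))^{\vee}\cong F^{e}(L^{\vee})$, and invoke the ascending chain condition on the Noetherian module $N$ --- is the right framework and matches the $\Delta^{e}/\Psi^{e}$ formalism (Lyubeznik, Katzman) that the paper itself recalls in Section \ref{sect:loc_case}.

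However the pivot of your argument, the claim $M_{i}^{\vee}\cong\ker(\alpha_{i})$, is incorrect, and the error sits precisely where the theorem has its content. Unwinding the definitions, $\alpha_{i}$ is the Matlis dual of the linearisation $\widetilde{\Theta}_{i}\colon F^{ei}(M)\to M$, $a\otimes m\mapsto a\,\Theta^{i}(m)$, so exactness of $(-)^{\vee}$ gives $\ker(\alpha_{i})\cong(\Coker\widetilde{\Theta}_{i})^{\vee}=\bigl(M/\langle\Theta^{i}(M)\rangle\bigr)^{\vee}$, where $\langle\Theta^{i}(M)\rangle$ is the $R$-submodule generated by the image of $\Theta^{i}$. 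In other words $\ker(\alpha_{i})$ is dual to the \emph{cokernel} of the linearised Frobenius power, not to its kernel; $M_{i}^{\vee}$ is a quotient of $N$ while $\ker(\alpha_{i})$ is a submodule, and the two are not isomorphic in general. The underlying issue is that the kernel of a Frobenius map need not match the kernel of its linearisation: over $R=k[[x]]$ the $1^{st}$-Frobenius map $\psi\colon R/(x)\oplus R/(x)\to R/(x^{p})$, $\psi(a,b)=a^{p}+b^{p}x$, is injective, yet its linearisation $(R/(x^{p}))^{\oplus 2}\to R/(x^{p})$, $(c_{1},c_{2})\mapsto c_{1}+c_{2}x$, has a nontrivial kernel.

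As a result, what your ACC argument actually proves is that the ascending chain $\ker(\alpha_{i})$ stabilises, i.e.\ that the descending chain $\langle\Theta^{i}(M)\rangle$ of submodules of $M$ stabilises --- but this already follows from $M$ being Artinian, with no need for regularity, completeness or duality. The genuine content of the Hartshorne--Speiser/Lyubeznik theorem is the passage from stabilisation of the chain of images to stabilisation of the chain of kernels $M_{i}=\ker(\Theta^{i})$. A way to close the gap is: let $M_{s}=\langle\Theta^{e_{0}}(M)\rangle$ be the stable image (so $\langle\Theta(M_{s})\rangle=M_{s}$, equivalently the dual map $\alpha_{s}\colon N_{s}\to F^{e}(N_{s})$ is \emph{injective}); prove as a separate lemma that this forces $\Nil(M_{s})=0$; and then conclude via $\Theta^{e_{0}}(\Nil(M))\subseteq \Nil(M)\cap M_{s}\subseteq\Nil(M_{s})=0$. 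That lemma is where Kunz's flatness of Frobenius and the structure on the dual side are genuinely used, and it is precisely the step your proposal elides.
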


\begin{defi}\label{defi:nilp}
We define the \emph{HSL number} or \emph{index of nilpotency} of $\varphi$ on $M$, denoted $\HSL(M)$, 
to be the smallest integer $e$ at which $\varphi^e (\Nil(M))=0$, or $\infty$ if no such $e$ exists.
\end{defi}
We can rephrase Theorem \ref{incl} by saying that under the hypothesis of the theorem, $\HSL(M)<\infty$.
\\

Another way of describing a Frobenius map $\varphi\colon M\to M$ on an $R$-module $M$ is to think of $M$ as a module over a certain skew-commutative ring $R[\theta;f^e]$ where the latter is defined as follows. $R[\theta;f^e]$ is the free $R$-module $\bigoplus_{i=0}^\infty R\theta^i$ endowed with the further non-commutative operation $\theta s=s^{p^e}\theta$ for every $s\in S$. Therefore it is equivalent to say that $M$ is an $R$-module with a Frobenius action given by $\varphi$ and that $M$ is an $R[\theta;f^e]$-module with module structure given by $\theta m=\varphi(m)$. %Clearly every Frobenius action corresponds to a different module structure.
\\
%If two ideals $I$ and $J$ of a ring $R$ have the same radical then $\HH^i_I(M)\cong \HH^i_J(M)$ for all $i$ and all $R$-modules $M$, cf. \cite[Proposition 3.1.1]{3}. \\
%Recall that a map of $R$-modules $M\to N$ induces for every $i$ a map $\HH^i_I(M)\to\HH^i_I(N)$ on the local cohomology modules.

The action of Frobenius on a local cohomology module is constructed as follows.
%Let $I\subset R$ be an ideal and let $S$ be an $R$-algebra then for every index $i$  there exists a map $F\colon \HH^i_I(R)\to \HH^i_I(S)$ and we can identify $\HH^i_I(S)$ with $\HH^i_{IS}(S)$. In particular choose $S=R$ and let the map $R\to S$ be the Frobenius endomorphism; since $IS=I^{[p]}$ then the map above is $F\colon \HH_I^i(R)\to\HH^i_{I^{[p^e]}}(R)$;  and because $\sqrt{I}=\sqrt{I^{[p]}}$ then it follows that $\HH^i_{I^{[p]}}(R)\cong \HH^i_I(R)$ and the map is actually
%$$F\colon \HH^i_I(R)\to \HH^i_I(R).$$
%$F$ is \emph{the action of Frobenius on} $ \HH^i_I(R)$.\\
Any $R$-linear map $M\to N$ induces a map $\HH^i_I(M)\to\HH^i_I(N)$ for every $i$.
%Regard $R$ as a module with $R$-module structure given by $r\cdot s=r^ps$ where the operation on the right hand side is simply the multiplication in $R$. As before $F_*R=\{F_*r\vert r\in R\}$ is the abelian group $R$ with  $R$-module structure given by $r\cdot F_* a=F_*r^pa$. 
The map $R\to F_*R$ sending $r\mapsto F_*r^p$ is $R$-linear because $F_*r^p=r\cdot F_* 1$ and so it induces for every $i$ a map $\HH^i_I(R)\to \HH^i_I(F_*R)=\HH^i_{IF_*R}(F_*R)=\HH^i_{F_*I^{[p^e]}}(F_*R)=\HH^i_{F_*I}(F_*R)=F_*\HH^i_I(R)$ where in the first equality we used the Independence Theorem for local cohomology \cite[Proposition 4.1]{3} and in the third that the ideals $I$ and $I^{[p^e]}$ have same radical cf. \cite[Proposition 3.1.1]{3}. So we get an $R$-linear map $\HH^i_I(R)\to F_*\HH^i_I(R)$ which is the same as a Frobenius map $\HH^i_I(R)\to \HH^i_I(R)$.

%\begin{oss}\label{compatible}
%Let $M$ be an $R$-module and $N\subset M$ a submodule. Let $f\colon M\to M$ be a Frobenius map and consider the restriction $\bar{f}\colon N\to N$ of $f$ to $N$. %for $rm\mapsto F_*f(rm)=F_*r^pf(m)=r\bullet F_*f(m)$ where $\bullet$ is the twisted action. 
%The maps $f$ and $\bar{f}$ induce the Frobenius maps $\HH^i_I(M)\to F_R^e \HH^i_I(M)$ and $\HH^i_I(N)\to F_R^e \HH^i_I(N)$ respectively; these are $R$-linear and the diagram below commutes:

%\[
 %\xymatrix{
%\HH^i_I(M) \ar[r]& F_R^e\HH^i_I(M) \\
%\HH^i_I(N) \ar[r] \ar[u] &  F_R^e\HH^i_I(N).   \ar[u]\\
%}
%\]
%\end{oss}

If $(R,\m)$ is regular of dimension $d$ and $x_1,\cdots,x_d$ is a system of parameters for $R$ then we can write $\Hdr$ as the direct limit 
$$\frac{R}{(x_1,\cdots,x_d)R}\overset{x_1\cdots x_d}{\longrightarrow}\frac{R}{(x_1^2,\cdots,x_d^2)R}\overset{x_1\cdots x_d}{\longrightarrow}\cdots$$
where the maps are the multiplication by $x_1\cdots x_d$.\\
Another way to describe the natural Frobenius action on $\Hdr$ is the following. The natural Frobenius map on $R$ induces a natural Frobenius map on $\Hdr$ in the following way; a map $\phi\in \mathcal{F}^e(\Hdr)$ is defined on the direct limit above by mapping the coset $a+(x_1^t,\cdots,x_d^t)R$ in the t-th component to the coset $a^{p^e}+(x_1^{tp^e},\cdots,x_d^{tp^e})R$ in the $tp^e$-th component.

\begin{defi} 
A local ring $(R,\m)$ is $F$-\emph{injective} if the natural Frobenius map $\Hir\to \Hir$ is injective for all $i$.
\end{defi}
%If $(R,\m)$ is a Cohen-Macaulay ring of dimension $d$ and $M$ is an $R$-module then the only non-trivial local cohomology module
% is the top local cohomology $\Hdm$. Therefore, a Cohen-Macaulay module $M$ is $F$-injective if and only if $\HSL(\Hdm)=0$.\\
%We shall say that $(R,\m)$ is CMFI if it is Cohen-Macaulay and $F$-injective. 
%A non-local ring $R$ is $\CMFI$ if for each maximal ideal $\m\subset R$ the localisation $R_\m$ is CMFI. We define the $F$-injective
%\emph{locus} of $R$ to be:
%$$\CMFI (R)=\{ \p \in \Spec(R) \ \vert \ R_\p \textit{ is } \CMFI \}.$$
%Our results are a generalisation of  \cite[Prop. 4.8]{H} where it is proven that under certain hypothesis, the $F$-injective locus of a ring is open.\\

The structure of this paper is the following; in Section \ref{The $I_e(-)$ operator} we define the operator $I_e(-)$ and in the case of a 
polynomial ring $A$ we show that it commutes with completions and localisations with respect to any multiplicatively closed subset of $A$. In section \ref{sect:deltapsi} we define the $\Delta^e$- and the $\Psi^e$-functors, cf. \cite{7}. 
In Section \ref{sect:loc_case} we consider a quotient $S$ of a regular local ring $(R,\m)$ and we give an explicit description of the $R$-module 
$\mathcal{F}^e(\Hs)$ consisting of all  $e^{th}$-Frobenius maps acting on the top local cohomology module $\Hs$. We then give a formula to compute $\HSL(\Hs)$ when $S$ is a Cohen-Macaulay domain. %as a corollary we get a characterisation for $S$ to be $F$-injective. 
In Section \ref{sect:non-loc_case} we prove that the set 
$\mathcal{B}_e= \left\{\p\in \Spec(A)\vert \HSL \left(\HH^{d_{(\p)}}_{\p \widehat{B}_\p}(\widehat{B}_\p)\right)<e \right\}$, where $B$ is a 
quotient of a polynomial ring $A$, is a Zariski open set. Note that this result generalises the openness of the F-injective locus. %Note that it follows that the $F$-injective locus of a quotient of a polynomial ring $B$, i.e. the set of all primes $\p$ such that $B_\p$ is $F$-injective, is Zariski open. 
Furthermore, in Section \ref{section:1stAlgorithm} we provide an algorithm for computing the $\HSL$-loci and we give an example. The results of Section \ref{Section:application} give a global test exponent for the calculation of Frobenius closures of parameter ideals in Cohen-Macaulay rings.

\section{The $I_e(-)$ operator}\label{The $I_e(-)$ operator}
In this section we define the operator $I_e(-)$ which has been introduced in \cite{6}, and in \cite{3} with the notation $(-)^{[1/p^e]}$. We will show that this commutes with localisations and completions.

For any ideal $I$ of a ring $R$, we shall denote by $I^{[p^e]}$ the $e^{th}$-Frobenius power of $I$, i.e. the ideal generated by 
$\{a^{p^e}\vert a\in I\}$.

\begin{defi}\label{def_I_e}
If $R$ is a ring and $J\subseteq R$ an ideal of $R$ we define $I_e(J)$ to be the smallest ideal $L$ of $R$ such that its $e^\text{th}$-Frobenius power $L^{[p^e]}$ contains $J$.
\end{defi}

In general, such an ideal may not exist; however it does exist in polynomial rings and power series rings, cf \cite[Proposition 5.3]{7}.\\

Let $A$ be a polynomial ring $\K[x_1,\dots,x_n]$ and $W$ be a multiplicatively closed
subset of $A$ and $J\subset A$ an ideal.

%\begin{lemma}\label{prem3_I_e_commutes}
%If $L\subseteq \W A$ is any ideal then $W^{-1}( L\cap A)=L$.
%\end{lemma}
%\begin{proof}
%If $L$ is generated by $\frac{g_1}{1},\dots,\frac{g_s}{1}$ then $g_1,\cdots,g_s\in L\cap A$ and $\W(L\cap A)$ contains the ideal generated by $\frac{g_1}{1},\dots,\frac{g_s}{1}$, so $\W(L\cap A)\supseteq L$. Viceversa, $\W(L\cap A)$ contains all the $\W A$-linear combinations of $\frac{g_1}{1},\dots,\frac{g_s}{1}\in L$ hence  $\W(L\cap A)\subseteq L$.
%\end{proof}

\begin{lemma}\label{prem2_I_e_commutes}
If $L\subseteq \W A$ is any ideal then $L^{[p^e]}\cap A=(L\cap A)^{[p^e]}$.
\end{lemma}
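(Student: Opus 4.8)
The plan is to prove the two inclusions separately, with the nontrivial direction being $L^{[p^e]}\cap A \subseteq (L\cap A)^{[p^e]}$. The easy inclusion $(L\cap A)^{[p^e]} \subseteq L^{[p^e]}\cap A$ is immediate: if $a\in L\cap A$ then $a^{p^e}\in L^{[p^e]}$ and $a^{p^e}\in A$, and since $L^{[p^e]}\cap A$ is an ideal of $A$ it contains the ideal generated by all such $a^{p^e}$, which is exactly $(L\cap A)^{[p^e]}$.

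For the reverse inclusion, the key observation is that $\W A$ is a free $A$-module-like object in a very concrete combinatorial sense: pick a $\K$-basis of $A$ consisting of monomials, and note that the Frobenius power operation interacts well with the $p^e$-th power decomposition of exponents. Concretely, write each exponent vector $\alpha\in\N^n$ uniquely as $\alpha = p^e\beta + \gamma$ with $0\le \gamma_i < p^e$; this partitions the monomials $x^\alpha$ into $p^{en}$ classes indexed by the "residues" $\gamma$, and for a fixed residue $\gamma$ the monomials $\{x^{p^e\beta+\gamma}\}_\beta$ are precisely $x^\gamma\cdot (x^\beta)^{p^e}$. The upshot is that $A$ decomposes as a direct sum $A = \bigoplus_{\gamma} x^\gamma A^{p^e}$ of free $A^{p^e}$-modules, and this decomposition is compatible with localisation: $\W A = \bigoplus_\gamma x^\gamma (\W A)^{p^e}$ (using that $\W$ can be taken to consist of $p^e$-th powers, or more carefully that $(\W A)^{p^e}$ is the localisation of $A^{p^e}$ at the relevant multiplicative set). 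An element of $L^{[p^e]}$ then has the form $\sum_j b_j^{p^e} \ell_j$ with $\ell_j\in L$; expanding each $\ell_j$ in the monomial basis and regrouping, an element of $L^{[p^e]}$ is a sum $\sum_\gamma x^\gamma c_\gamma^{p^e}$ where each $c_\gamma$ lies in the ideal of $\W A$ generated by the "$\gamma$-components" of elements of $L$ — but since $L$ is an ideal and the decomposition above is $A$-module-direct, these $\gamma$-components already lie in $L$ themselves.

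Now suppose $f\in L^{[p^e]}\cap A$. Writing $f = \sum_\gamma x^\gamma g_\gamma^{p^e}$ with $g_\gamma\in L$ (componentwise, as above) and simultaneously $f = \sum_\gamma x^\gamma h_\gamma^{p^e}$ with $h_\gamma\in A$ (its decomposition inside $A$), the uniqueness of the decomposition $\W A = \bigoplus_\gamma x^\gamma(\W A)^{p^e}$ forces $g_\gamma^{p^e} = h_\gamma^{p^e}$ in $\W A$ for each $\gamma$, hence $g_\gamma = h_\gamma$ (Frobenius is injective on the reduced ring $\W A$, and $A\subseteq\W A$ has no relevant nilpotents since $A$ is a domain). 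Therefore each $g_\gamma$ lies in $L\cap A$, and $f = \sum_\gamma x^\gamma g_\gamma^{p^e} \in (L\cap A)^{[p^e]}$, as desired.

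The main obstacle I anticipate is making the direct-sum decomposition and its behaviour under localisation fully rigorous — in particular, being careful that $(\W A)^{p^e}$, i.e. the subring of $p^e$-th powers, really is a localisation of $A^{p^e}$ at a suitable multiplicative set and that the monomial basis argument survives passage to $\W A$ (one may need to enlarge $W$ to be saturated or to consist of $p^e$-th powers, which changes neither $\W A$ nor the statement). Once that bookkeeping is in place, the argument is essentially the "Frobenius is flat/free over a perfect-enough base" heuristic made explicit, and the conclusion drops out from uniqueness of the basis expansion. An alternative, slicker route would be to invoke faithful flatness of the localisation map $A\to\W A$ together with a known identity $L^{[p^e]}\cap A = (L\cap A)^{[p^e]}$ proved by the freeness of Frobenius over $A$ in \cite{K1}; but I expect the direct monomial argument to be cleaner to write out here.
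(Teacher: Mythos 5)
Your overall strategy --- use the monomial decomposition $A = \bigoplus_\gamma x^\gamma A^{p^e}$ together with its localised version $W^{-1}A = \bigoplus_\gamma x^\gamma (W^{-1}A)^{p^e}$, then read off the $\gamma$-components of $f\in L^{[p^e]}\cap A$ and compare them with the decomposition of $f$ inside $A$ --- is a sound one and genuinely different from the paper's, which simply picks lifts $g_1,\dots,g_s\in A$ of generators of $L$ and asserts that both sides are generated by $g_1^{p^e},\dots,g_s^{p^e}$. The freeness of $A$ over $A^{p^e}$ (Kunz-type freeness of Frobenius) is exactly the right tool to make the commutation of bracket powers with contraction honest, and your closing step via uniqueness of the decomposition and injectivity of Frobenius on the domain $W^{-1}A$ is correct.

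However the central computation as written does not work. You describe an element of $L^{[p^e]}$ as $\sum_j b_j^{p^e}\ell_j$ with $\ell_j\in L$; this describes $L$ itself, not $L^{[p^e]}$ (the $b_j^{p^e}$ already generate the unit ideal), whereas $L^{[p^e]}$ consists of sums $\sum_j b_j\,\ell_j^{p^e}$. Consequently you expand the $\ell_j$ in the monomial basis and lean on the claim that "the $\gamma$-components of elements of $L$ already lie in $L$," which is false: an ideal of $W^{-1}A$ need not be graded for the decomposition $\bigoplus_\gamma x^\gamma(W^{-1}A)^{p^e}$. For instance with $L=(1+x)\subset\K[x]$ and $p=2$, both $\gamma$-components of $1+x$ equal $1\notin L$. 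The repair is to expand the coefficients rather than the $\ell_j$: starting from $\sum_j b_j\ell_j^{p^e}$ with $b_j=\sum_\gamma x^\gamma b_{j,\gamma}^{p^e}$, regroup using $\sum_j b_{j,\gamma}^{p^e}\ell_j^{p^e}=\bigl(\sum_j b_{j,\gamma}\ell_j\bigr)^{p^e}$ to obtain $c_\gamma=\sum_j b_{j,\gamma}\ell_j$, which lies in $L$ simply because $L$ is an ideal and the $\ell_j$ belong to $L$. With that correction the rest of your argument closes the proof. Two smaller caveats: the decomposition $A=\bigoplus_\gamma x^\gamma A^{p^e}$ uses $\K$ perfect, a hypothesis the paper states only from Section~4 onward (for general $\K$ one needs to enlarge the basis by a $\K^{p^e}$-basis of $\K$); and the alternative you float invoking "faithful flatness of $A\to W^{-1}A$" is not available, since localisation is flat but not faithfully flat.
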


\begin{proof}
Let $\frac{g_1}{1}, \dots, \frac{g_s}{1}$ be a set of generators for $L$ and let $G$ be the ideal of $A$ generated by $g_1, \dots, g_s$. Then we can write $L^{[p^e]}\cap A=\sum_{w\in W} (G^{[p^e]} :_A w)$ and 
$(L\cap A)^{[p^e]}=\sum_{w\in W} (G :_A w)^{[p^e]}$. 
Since $A$ is regular, for any $w\in W$, $w^{p^e}$ is in $W$ and  $(G^{[p^e]} :_A w^{p^e})= (G:_A w)^{[p^e]}$ so $(L\cap A)^{[p^e]}\subseteq L^{[p^e]}\cap A$.
Also  $ (G^{[p^e]} :_A w) \subseteq  (G^{[p^e]} :_A w^{p^e}) $ so $L^{[p^e]}\cap A\subseteq(L\cap A)^{[p^e]}$.
\end{proof}

\begin{lemma}\label{prel_thm_I_eComm}
If $J$ is any ideal of $A$ then $I_e(\W J)$ exists for any integer $e$ and equals $W^{-1}I_e(W^{-1}J\cap A)$.
\end{lemma}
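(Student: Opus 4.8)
The plan is to reduce the statement to the already-known existence of $I_e$ in the polynomial ring $A$ (Remark~\ref{existence_I_e}), using Lemma~\ref{prem2_I_e_commutes} to transfer Frobenius powers back and forth between $A$ and $\W A$. Set $J'=\W J\cap A$, which is a genuine ideal of $A$, so $L_0:=I_e(J')$ exists by Remark~\ref{existence_I_e}. I claim that $I_e(\W J)$ exists and equals $\W L_0$, which is exactly the asserted formula.

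First I would check that $\W L_0$ is a legitimate candidate, i.e. $(\W L_0)^{[p^e]}\supseteq \W J$. Localisation commutes with the $p^e$-th Frobenius power (both $(\W L_0)^{[p^e]}$ and $\W(L_0^{[p^e]})$ are generated over $\W A$ by the $p^e$-th powers of a generating set of $L_0$), so $(\W L_0)^{[p^e]}=\W(L_0^{[p^e]})\supseteq \W J'=\W(\W J\cap A)=\W J$, the last equality being the standard fact that localising the contraction of an extended ideal recovers the extended ideal.

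Next comes the minimality. Let $L\subseteq\W A$ be any ideal with $L^{[p^e]}\supseteq\W J$. Contracting to $A$ and applying Lemma~\ref{prem2_I_e_commutes} gives $(L\cap A)^{[p^e]}=L^{[p^e]}\cap A\supseteq(\W J)\cap A=J'$. Thus $L\cap A$ is an ideal of $A$ whose $e^{\text{th}}$-Frobenius power contains $J'$, so by the minimality defining $L_0=I_e(J')$ we get $L_0\subseteq L\cap A\subseteq L$, hence $\W L_0\subseteq\W L=L$. Therefore $\W L_0$ is the smallest ideal of $\W A$ whose $e^{\text{th}}$-Frobenius power contains $\W J$; that is, $I_e(\W J)$ exists and equals $\W L_0=\W I_e(\W J\cap A)$.

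There is no real obstacle here: the two points that need care — neither difficult — are (i) noting that since $L$ is already an ideal of $\W A$ one has $\W L=L$, so contracting to $A$ and re-extending can only shrink $L$ (never enlarge it), which is what makes the minimality comparison legitimate; and (ii) invoking the localisation identity $\W(\W J\cap A)=\W J$. Everything else is routine bookkeeping with generators, already packaged in Lemma~\ref{prem2_I_e_commutes}.
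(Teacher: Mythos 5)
Your proof is correct and follows essentially the same route as the paper's: both reduce to $I_e(\W J\cap A)$ inside $A$, contract any candidate $L$ to $A$ via Lemma~\ref{prem2_I_e_commutes} to get minimality, and verify $(\W I_e(\W J\cap A))^{[p^e]}\supseteq \W J$ using the compatibility of localisation with Frobenius powers and the identity $\W(\W J\cap A)=\W J$. The only difference is the order of the two steps (you check candidacy before minimality, the paper does the reverse), which is immaterial.
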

\begin{proof}
Let $L\subseteq \W A$ be an ideal such that $\W J\subseteq L^{[p^e]}$, then $\W I_e(\W J\cap A)\subseteq L$; in fact, $\W J\cap A\subseteq L^{[p^e]}\cap A=(L\cap A)^{[p^e]}$ where the equality follows from Lemma \ref{prem2_I_e_commutes}. Thus $I_e(\W J\cap A)\subseteq L\cap A$ so $\W I_e(\W J\cap A)\subseteq \W(L\cap A) \subseteq L$. Hence $\W I_e(\W J\cap A)$ is contained in all the ideals $L$ such that $\W J\subseteq L^{[p^e]}$. If we show that $\W J\subseteq(\W I_e(\W J\cap A))^{[p^e]}$ then $I_e(\W J)$ exists and equals $W^{-1}I_e(W^{-1}J\cap A)$. But since $\W J\cap A\subseteq I_e(\W J\cap A)^{[p^e]}$ then using Lemma \ref{prem2_I_e_commutes} we obtain $\W J= \W (\W J\cap A)\subseteq \W (I_e(\W J\cap A)^{[p^e]})=(\W I_e(\W J\cap A))^{[p^e]}$.
\end{proof}

\begin{prop} \label{I_eCommutes}
Let $\widehat{A}$ denote the completion of $A$ with respect to any prime ideal and $W$ any multiplicatively closed
subset of $A$. Then the following hold:

\begin{enumerate}
\item $I_e(J\otimes_A \widehat{A})=I_e(J)\otimes_A\widehat{A}$,  for any ideal $J\subseteq A$;
\item  $W^{-1}I_e(J)=I_e(W^{-1}J).$
\end{enumerate}
\end{prop}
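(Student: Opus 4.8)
The plan is to prove part (2) first, since part (1) for completion with respect to a prime ideal $\p$ can be obtained by factoring the completion map $A \to \widehat{A_\p}$ through the localisation $A \to A_\p$ (equivalently $W^{-1}A$ with $W = A \setminus \p$) and then handling the $\p A_\p$-adic completion of the Noetherian local ring $A_\p$. For part (2), the key observation is that $W^{-1}I_e(J) = W^{-1}I_e(W^{-1}J \cap A)$ is almost immediate once we know the two ideals $I_e(J)$ and $I_e(W^{-1}J \cap A)$ generate the same extended ideal in $W^{-1}A$; and Lemma~\ref{prel_thm_I_eComm} already gives $I_e(W^{-1}J) = W^{-1}I_e(W^{-1}J \cap A)$, so it suffices to show $W^{-1}I_e(J) = W^{-1}I_e(W^{-1}J \cap A)$. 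For the inclusion $\subseteq$, note $J \subseteq W^{-1}J \cap A$, so any ideal $L$ with $L^{[p^e]} \supseteq W^{-1}J \cap A$ also satisfies $L^{[p^e]} \supseteq J$, whence $I_e(J) \subseteq I_e(W^{-1}J \cap A)$ and the same after applying $W^{-1}$. For $\supseteq$, I would show that $W^{-1}\bigl(I_e(J)\bigr)$, viewed back inside $A$ as $W^{-1}I_e(J) \cap A$, is an ideal whose Frobenius power contains $W^{-1}J \cap A$; using Lemma~\ref{prem2_I_e_commutes} one has $\bigl(W^{-1}I_e(J) \cap A\bigr)^{[p^e]} = \bigl(W^{-1}I_e(J)\bigr)^{[p^e]} \cap A = W^{-1}\bigl(I_e(J)^{[p^e]}\bigr) \cap A \supseteq W^{-1}J \cap A$, since $I_e(J)^{[p^e]} \supseteq J$. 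Hence $I_e(W^{-1}J \cap A) \subseteq W^{-1}I_e(J) \cap A$, and extending to $W^{-1}A$ gives the reverse inclusion.

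For part (1), write $\widehat{A}$ for the $\p A_\p$-adic completion of $A_\p$. By part (2) we may replace $A$ by the local ring $R := A_\p$ and $J$ by its extension $JR$, so it suffices to prove $I_e(J\widehat{R}) = I_e(JR)\widehat{R}$ for a Noetherian local ring $R$ that is a localisation of a polynomial ring (so that both $I_e$'s exist, by Remark~\ref{existence_I_e}, since $\widehat{R}$ is a power series ring). The inclusion $I_e(JR)\widehat{R} \subseteq I_e(J\widehat{R})$ follows from minimality: $\bigl(I_e(JR)\widehat{R}\bigr)^{[p^e]} = \bigl(I_e(JR)^{[p^e]}\bigr)\widehat{R} \supseteq (JR)\widehat{R} = J\widehat{R}$, so $I_e(J\widehat{R})$, being the smallest such ideal, is contained in $I_e(JR)\widehat{R}$. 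For the reverse inclusion one uses faithful flatness of $R \to \widehat{R}$ together with minimality again: set $K := I_e(J\widehat{R})$, and I would argue that $K$ is already extended from $R$, i.e. $K = (K \cap R)\widehat{R}$, and that $(K\cap R)^{[p^e]} \supseteq JR$, forcing $I_e(JR) \subseteq K \cap R$ and hence $I_e(JR)\widehat{R} \subseteq K$. Combined with the first inclusion this gives equality, and chasing back through part (2) yields the statement with $\widehat{A}$ in place of $\widehat{R}$.

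The main obstacle is the ``descended'' statement needed in part (1): showing that $I_e(J\widehat{R})$ is extended from $R$ and that its contraction has the right Frobenius power. The difficulty is that $I_e(-)$ is defined by a minimality (intersection) property rather than by explicit generators, so one cannot simply track generators through the faithfully flat base change the way Lemma~\ref{prem2_I_e_commutes} does for localisation. I expect this to require the explicit construction of $I_e$ in a power series ring from \cite[Proposition 5.3]{K1}: there $I_e(J)$ is built by expanding elements of $J$ in a free $R^{[p^e]}$-basis of $R$ (the ``$p^e$-basis'' monomials $x^{\alpha}$ with $0 \le \alpha_i < p^e$) and collecting the resulting ``coordinate'' coefficients, and this construction is manifestly compatible with the inclusion $R \hookrightarrow \widehat{R}$ because the same finite monomial basis works for both $A_\p$ (after clearing denominators) and $A_\p\llbracket x - \p \rrbracket$. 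Making that compatibility precise — in particular checking that the coefficient-extraction commutes with completion — is the technical heart of the argument; once it is in place, both inclusions in part (1) are formal consequences of minimality and faithful flatness as sketched above.
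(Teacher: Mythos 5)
Your part~(2) matches the paper's argument in substance: the inclusion $W^{-1}I_e(J)\subseteq I_e(W^{-1}J)$ comes from monotonicity of $I_e$ applied to $J\subseteq W^{-1}J\cap A$ plus Lemma~\ref{prel_thm_I_eComm}, and the reverse inclusion comes from Lemma~\ref{prem2_I_e_commutes} and minimality. You route the reverse inclusion through $W^{-1}J\cap A$ once more, whereas the paper simply observes $W^{-1}J\subseteq W^{-1}\bigl(I_e(J)^{[p^e]}\bigr)=\bigl(W^{-1}I_e(J)\bigr)^{[p^e]}$ and invokes minimality of $I_e(W^{-1}J)$ directly; this is a cosmetic difference, not a gap.

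For part~(1) you take a genuinely different and more complicated route, and the route has a gap. You reduce to $R:=A_\p$ via part~(2) and then aim to prove $I_e(J\widehat{R})=I_e(JR)\widehat{R}$ by showing that $K:=I_e(J\widehat{R})$ is \emph{extended} from $R$, i.e.\ $K=(K\cap R)\widehat{R}$. That descent claim is exactly the hard point, you flag it yourself as ``the technical heart,'' and you only sketch a $p^e$-basis argument for it --- it is not established. More importantly, it is also not needed. The paper treats $A\to\widehat{A}$ directly and runs the same minimality scheme as in part~(2): the only nontrivial input is the completion analogue of Lemma~\ref{prem2_I_e_commutes}, namely $(L\cap A)^{[p^e]}=L^{[p^e]}\cap A$ for an ideal $L\subseteq\widehat{A}$, which the paper cites from \cite[Lemma 6.6]{Ly-Sm}. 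From $I_e(\widehat{J})^{[p^e]}\supseteq\widehat{J}$ this gives $\bigl(I_e(\widehat{J})\cap A\bigr)^{[p^e]}\supseteq\widehat{J}\cap A=J$, so minimality yields $I_e(J)\subseteq I_e(\widehat{J})\cap A$, and then the \emph{automatic} extension--contraction containment $\bigl(I_e(\widehat{J})\cap A\bigr)\otimes_A\widehat{A}\subseteq I_e(\widehat{J})$ gives $I_e(J)\otimes_A\widehat{A}\subseteq I_e(\widehat{J})$; no claim that $I_e(\widehat{J})$ is extended is ever used. Your other inclusion, $I_e(\widehat{J})\subseteq I_e(J)\otimes_A\widehat{A}$ from $\bigl(I_e(J)\otimes_A\widehat{A}\bigr)^{[p^e]}\supseteq\widehat{J}$, is correct and is what the paper does. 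So the fix is: drop the reduction to the local ring and the descent of $K$ to $R$, and replace them with the citation of the Frobenius--contraction lemma \cite[Lemma 6.6]{Ly-Sm} together with the trivial containment $(L\cap A)\widehat{A}\subseteq L$.
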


\begin{proof}

\begin{enumerate}
\item Write $\widehat{J}=J\otimes_A \widehat{A}$. Since $I_e(\widehat{J})^{[p^e]}\supseteq\widehat{J}$ using \cite[Lemma 6.6]{11} we obtain
$$( I_e(\widehat{J})\cap A)^{[p^e]}=I_e(\widehat{J})^{[p^e]}\cap A\supseteq  \widehat{J}\cap A = J.$$
 But $I_e(J)$ is the smallest ideal such that $I_e(J)^{[p^e]}\supseteq J$, so $I_e(\widehat{J})\cap A \supseteq I_e(J)$ and hence
$I_e(\widehat{J})\supseteq(I_e( \widehat{J})\cap A)\otimes_A\widehat{A}\supseteq I_e(J)\otimes_A\widehat{A}$.\\
On the other hand, $(I_e(J)\otimes_A\widehat{A})^{[p^e]}=I_e(J)^{[p^e]}\otimes_A\widehat{A}\supseteq J\otimes_A \widehat{A}$ and so $I_e(J\otimes_A \widehat{A})\subseteq I_e(J)\otimes_A\widehat{A}$.

\item Since $J\subseteq W^{-1}J\cap A$, $I_e(J)\subseteq I_e(W^{-1}J\cap A)$, and so $W^{-1}I_e(J)\subseteq W^{-1}I_e(W^{-1}J\cap A)$. By  Lemma \ref{prel_thm_I_eComm}, $W^{-1}I_e(W^{-1}J\cap R)=I_e(W^{-1}J)$ hence $W^{-1}I_e(J)\subseteq I_e(W^{-1}J)$.\\
For the reverse inclusion it is enough to show that $W^{-1}J\subseteq (W^{-1}I_e(J))^{[p^e]}$ because from this it follows that  $ I_e(W^{-1}J)\subseteq W^{-1}I_e(J)$ which is what we require. Since $J\subseteq I_e(J)^{[p^e]}$ then $W^{-1}J\subseteq W^{-1}( I_e(J)^{[p^e]})=(W^{-1} I_e(J))^{[p^e]}$ where in the latter equality we have used Lemma \ref{prem2_I_e_commutes}.
\end{enumerate}
\end{proof}

%%%%%%%%%%%%%%%%%%%%%%%%%%%%%%%%%%%%%%%%%%%%%%%%%%%%%
%%%%%%%%%%%%%%%%%%%%%%%%%%%%%%%%%%%%%%%%%%%%%%%%%%%%%
%%%%%%%%%%%%%%%%%%%%%%%%%%%%%%%%%%%%%%%%%%%%%%%%%%%%%
%%%%%%%%%%%%%%%%%%%%%%%%%%%%%%%%%%%%%%%%%%%%%%%%%%%%%
%%%%%%%%%%%%%%%%%%%%%%%%%%%%%%%%%%%%%%%%%%%%%%%%%%%%%
%%%%%%%%%%%%%%%%%%%%%%%%%%%%%%%%%%%%%%%%%%%%%%%%%%%%%

\section{The $\Delta^e$- and $\Psi^e$-functors} \label{sect:deltapsi}

Let $(R,\m)$ be a complete and local ring and let $(-)^\vee$ denote the \emph{Matlis dual}, i.e. the functor $\Hom_R(-,E_R)$, where 
$E_R=\ER$ is the injective hull of the residue field $\K$ of $R$. In this section we recall the notions of $\Delta^e$-functor and $\Psi^e$-functor which have been described in more detail in \cite[Section 3]{7}.\\
Let $\mathcal{C}^e$ be the category of Artinian $R[\theta,f]$-modules and $\mathcal{D}^e$ the category of $R$-linear maps
$\alpha_M\colon M\to F^e_R(M)$ with $M$ a Noetherian $R$-module and where a morphism between
$M\overset{\alpha_M}{\to}  F^e_R(M)$ and $N\overset{\alpha_N}{\to} F^e_R(N)$ is a commutative diagram of $R$-linear maps:
\[
 \xymatrix{
M \ar[r]^{h}\ar[d]_{\alpha_M} & N  \ar[d]_{\alpha_N}\\
 F^e_R(M) \ar[r]^{ F^e_R(h)}  &  F^e_R(N) .\\
}
\]
We define a functor $\Delta^e : \mathcal{C}^e \rightarrow \mathcal{D}^e$ as follows: given an  $e^{th}$-Frobenius map $\theta$ of the Artinian $R$-module $M$, we obtain an $R$-linear map
$\phi: F_*^e(R) \otimes_R M \rightarrow M$ which sends $F_*^e r \otimes m$ to $r \theta m$. 
Taking Matlis duals, we obtain the $R$-linear map
$$ M^\vee \rightarrow (F_*^e(R) \otimes_R M)^\vee \cong F_*^e(R) \otimes_R M^\vee$$
where the last isomorphism is the functorial isomorphism described in \cite[Lemma 4.1]{9}.
This construction can be reversed, yielding a functor  $\Psi^e : \mathcal{D}^e \rightarrow \mathcal{C}^e$ such that
$\Psi^e \circ \Delta^e $ and $\Delta^e\circ \Psi^e$ can naturally be identified with the identity functor.
See  \cite[Section 3]{7} for the details of this construction.

%%%%%%%%%%%%%%%%%%%%%%%%%%%%%%%%%%%%%%%%%%%%%%%%%%%%%
%%%%%%%%%%%%%%%%%%%%%%%%%%%%%%%%%%%%%%%%%%%%%%%%%%%%%
%%%%%%%%%%%%%%%%%%%%%%%%%%%%%%%%%%%%%%%%%%%%%%%%%%%%%
%%%%%%%%%%%%%%%%%%%%%%%%%%%%%%%%%%%%%%%%%%%%%%%%%%%%%
%%%%%%%%%%%%%%%%%%%%%%%%%%%%%%%%%%%%%%%%%%%%%%%%%%%%%
%%%%%%%%%%%%%%%%%%%%%%%%%%%%%%%%%%%%%%%%%%%%%%%%%%%%%
\section{The local case}\label{sect:loc_case}
In this section we give an explicit formula for the $\HSL$-numbers. \\

Let $(R,\m)$ be a complete, regular and local ring, $I$ an ideal of $R$ and write $S=R/I$.
Let $d$ be the dimension of $S$ and suppose $S$ is Cohen-Macaulay with canonical module $\bar\omega$. Assume that $S$ is generically Gorenstein (i.e. each localisation of $S$ at a prime ideal is Gorenstein) so that 
$\bar\omega\subseteq S$ is an ideal of $S$, (see. \cite[Proposition 2.4]{12}), and consider the following short exact sequence:
$$0\to \bar\omega\to S \to S/\bar\omega\to 0$$
that induces the long exact sequence
$$\cdots\to \HH^{\di-1}_{\m S}(S) \to \HH^{\di-1}_{\m S}(S/\bar\omega)\to \HH^{\di}_{\m S}(\bar\omega)\to \HH^{\di}_{\m S}(S)\to 0.$$
Since $S$ is Cohen-Macaulay, the above reduces to
\begin{equation}\label{ex_seq_loc_coho}
0\to \Hsw \to \Hw \to \Hs\to 0.
\end{equation}

As noted in the introduction, a natural Frobenius map acting on $S$ induces a natural Frobenius map acting on $\Hs$. 
The following theorem gives a description of the
natural Frobenius (up to a unit) which we will later use in Theorem \ref{teo:fraction}.

%The $R$-module consting of all $e^{th}$-Frobenius maps acting on $E_S$ which induce a Frobenius map on $\Hs$ is of the form

\begin{teo}[cf.~ in Example~3.7 \cite{11}]   \label{teo:unique_generator}
Let $\mathcal{F}^e:=\mathcal{F}_e(\Hs)$ be the $R$-module consting of all  $e^{th}$-Frobenius maps acting on $\Hs$. Then $\mathcal{F}^e$ is generated by one element which corresponds, up to unit, to the natural Frobenius map.
\end{teo}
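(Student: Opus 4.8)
The plan is to transport the whole question through the Matlis-dual equivalence $\Delta^e,\Psi^e$ recalled above, turning the $R$-module $\mathcal{F}_e$ of $e^{th}$-Frobenius maps on $E_S=\ES$ that descend to $\Hs$ into a Hom-module of maps $N\to F^e_R(N)$ for an appropriate Noetherian $N$, and then computing that Hom explicitly using that $R$ is complete regular local. First I would identify, via Matlis duality over $R$, the Artinian module on which the Frobenius maps act: $E_S\cong (S)^\vee$ where $\vphantom{}^\vee=\Hom_R(-,E_R)$, and a Frobenius map $\Theta\colon E_S\to E_S$ corresponds under $\Delta^e$ to an $R$-linear map $\alpha\colon S\to F^e_R(S)$. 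Since $R$ is regular, $F^e_R(-)$ is exact, so applying it to $0\to I\to R\to S\to 0$ gives $F^e_R(S)=R/I^{[p^e]}$, and $\Hom_R(S,R/I^{[p^e]})=\Hom_R(R/I,R/I^{[p^e]})=(I^{[p^e]}:_R I)/I^{[p^e]}$. Thus $\mathcal{F}_e(E_S)\cong (I^{[p^e]}:_R I)/I^{[p^e]}$ as $R$-modules.

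Next I would pass from "Frobenius maps on $E_S$" to "Frobenius maps on $E_S$ that induce a Frobenius map on $\Hs$." Here I would use the short exact sequence \eqref{ex_seq_loc_coho} (equivalently its Matlis dual, the presentation of $S$ via $\bar\omega$) together with the hypothesis that $S$ is Cohen-Macaulay and generically Gorenstein so that $\bar\omega\subseteq S$. Matlis-dually, $\Hs{}^\vee\cong\bar\omega$ (up to the identification making $\bar\omega$ an ideal), so a Frobenius map on $\Hs$ corresponds to $\alpha\colon\bar\omega\to F^e_R(\bar\omega)$; the compatibility condition cutting out $\mathcal{F}_e$ inside $\mathcal{F}_e(E_S)$ becomes the condition that a given $\alpha\colon S\to R/I^{[p^e]}$ restricts appropriately along $\bar\omega\hookrightarrow S$. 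The key algebraic input is that $\bar\omega$ is a faithful $S$-module of rank one and $S$ is generically Gorenstein; this should force $(I^{[p^e]}:_R\bar\omega)=(I^{[p^e]}:_R I)=I^{[p^e]}:_R J$ for a suitable ideal, and more importantly it should show the relevant Hom-module is cyclic.

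The heart of the argument — and the main obstacle — is showing that the resulting $R$-module is \emph{principal}, generated by the class corresponding to the natural Frobenius. I would argue as follows: the natural Frobenius $T$ on $S$ lifts the Frobenius on $R$, so under $\Delta^e$ it corresponds to the canonical map $R/I\to R/I^{[p^e]}$, i.e.\ reduction mod $I^{[p^e]}$, whose image in $(I^{[p^e]}:_R I)/I^{[p^e]}$ is the class of $1$. So it suffices to prove $(I^{[p^e]}:_R I)/I^{[p^e]}$ (or the variant for $\bar\omega$) is generated by $1$ over $R$ — equivalently, generated by $S$ over $S$, i.e.\ that this colon module is principal as an $S$-module. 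This is exactly where generic Gorensteinness of the Cohen–Macaulay ring $S$ enters: localizing at a minimal prime $\p$, $S_\p$ is Gorenstein Artinian (or zero-dimensional after quotienting), its canonical module is free of rank one, and one gets that $\Hom_{S_\p}(\bar\omega_\p,F^e(\bar\omega_\p))$ is free of rank one; since $\mathcal{F}_e$ is a torsion-free (indeed an honest, not just rank-one) $S$-module embedding into this localization and $S$ is generically Gorenstein, a standard argument (e.g.\ the reflexivity/$S_2$ properties of $\bar\omega$ and Nakayama) promotes the rank-one statement at minimal primes to the statement that $\mathcal{F}_e$ itself is cyclic, generated by the natural Frobenius class. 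I would cite \cite[Example 3.7]{Ly-Sm} for the precise form of the rank-one computation and \cite[Section 3]{K1} for the $\Delta^e/\Psi^e$ bookkeeping, so the only real work is assembling the colon-ideal description and the generic-Gorenstein descent; the subtlety to watch is ensuring the compatibility condition "induces a Frobenius map on $\Hs$" is correctly matched, via \eqref{ex_seq_loc_coho}, with the restriction-along-$\bar\omega$ condition, rather than with a weaker or stronger requirement.
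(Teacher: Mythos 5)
The paper does not prove this theorem: it is imported directly from Lyubeznik and Smith, as the attribution in the theorem header (``cf.\ Example~3.7 \cite{Ly-Sm}'') makes clear, and the paper's own work begins only with the subsequent explicit description of $\mathcal{F}_e$ as the subquotient $(I^{[p^e]}:I)\cap(\omega^{[p^e]}:\omega)/I^{[p^e]}$ in Theorem~\ref{teo:fraction}. So there is no in-paper proof against which to match your sketch; you are reconstructing an argument the author chose to cite rather than reproduce.

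As a free-standing reconstruction, your opening moves are exactly the toolkit the paper uses later for Theorem~\ref{teo:fraction} --- Matlis duality, the $\Delta^e/\Psi^e$ dictionary, $(E_S)^\vee\cong R/I$, and the identification of Frobenius maps on $E_S$ with $R$-linear maps $R/I\to R/I^{[p^e]}$ --- so the framework is right. But two of your steps do not hold up. First, the natural Frobenius does \emph{not} correspond to the class of $1$ in $(I^{[p^e]}:I)/I^{[p^e]}$. There is no $R$-linear ``reduction mod $I^{[p^e]}$'' map $R/I\to R/I^{[p^e]}$, since $I^{[p^e]}\subseteq I$; the map $r+I\mapsto r^{p^e}+I^{[p^e]}$ is $p^e$-semilinear, not $R$-linear, and correspondingly $1\notin(I^{[p^e]}:I)$ unless $I\subseteq I^{[p^e]}$. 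In the paper's setup (via \cite[Prop.~4.1]{K1}) the natural Frobenius on $E_S$ is $uF$ for a specific $u\in(I^{[p]}:I)$, and its $e$-th iterate contributes the class of $u^{\nu_e}$, not of $1$. Second, and this is the real gap, the principality argument is incomplete. Torsion-free of generic rank one, even together with reflexivity or $S_2$-ness, does not imply cyclic --- any non-principal reflexive ideal of a normal domain is a counterexample --- and invoking Nakayama presupposes that $\mathcal{F}_e/\m\mathcal{F}_e$ has length one, which is precisely what needs to be proved, so the argument is circular as written. The content of \cite[Example~3.7]{Ly-Sm} that does the work is a duality identification producing the single generator directly; reducing to the Gorenstein fibre at minimal primes only tells you the generic rank, and the jump from there to global cyclicity is not covered by the ``standard argument'' you gesture at.
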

We aim to give an explicit description of the $R$-module $\mathcal{F}^e$ and consequently of the natural Frobenius map that generates it.

\begin{oss}
%A special case of Remark \ref{compatible} is the following. 
The inclusion $\bar\omega\to S$ is $R[\theta,f^e]$-linear where $\theta s=s^{p^e}$ acts on $\bar{\omega}$ by restriction. This induces an $R[\theta;f^e]$-linear map $\Hw\overset{\alpha}{\to}\Hs\to 0$ where the structure of $R[\theta,f^e]$-module on $\Hw$ is obtained from the one on $\bar\omega\subseteq S$ and where $\Hs$ has a natural structure of $R[\theta; f^e]$-module as we have seen in the introduction.   

Since any kernel of an $R[\theta; f^e]$-map is an $R[\theta; f^e]$-module, $\ker(\alpha)= \Hsw $ is an $R[\theta; f^e]$-module as well. Hence the sequence
$0\to \Hsw \to \Hw \to \Hs\to 0$
is an exact sequence of $R[\theta; f^e]$-modules.
\end{oss}

Identifying $\Hw$ with $E_S=\Ann_{E_R}(I)$ it is clear that $\Hsw$ must be of the form $\Ann_{E_S}(J)$ for a certain ideal $J\subseteq R$. More precisely we have the following:
\begin{lemma}\label{annomega}
$\Hsw$ and $\Ann_{E_S}(\bar\omega)$ are isomorphic.
\end{lemma}
\begin{proof}
If $\omega$ is the preimage of $\bar\omega$ in $R$ then $\Hsw\cong\Hrw$. Since $\Hrw$ is an $R$-submodule of $E_S=\Ann_{E_R}I\subseteq E_R$ then %by %\cite{} 
 $\Hrw=\Ann_{E_R}\left(0\colon_R \Hrw\right)$.
The fact that $(0:\Hrw)=\omega$ follows from \cite[Theorem 2.17]{10} replacing $R$ with $R/\omega$ and $(0)$ with $\omega$ and using the fact that $\omega$ is unmixed.
\end{proof}

\begin{oss}\label{u_nu}
All $R[\theta;f^e]$-module structures on $\Ann_{E_R}(I)=E_S$ have the form $uF$ where $F$ is the natural Frobenius map on $E_R$ and $u\in \left(I^{[p^e]}:I\right)$. The identification $\Hw$ with $E_S$ endows $E_S$ with a Frobenius map which then has to be of the form $uF$ with $u\in \left(I^{[p^e]}:I\right)$. \\
In general if we start with an $R[\theta;f]$-module $M$, we can consider $M$ as an $R[\theta_e,f^e]$-module where $f^e\colon R\to R$, $f^e(a)=a^{p^e}$, $\theta_e(m)=\theta^e(m)$. In our case, for $M=E_S$ the action of $\theta_e$ on $E_S$ is:
$$\theta_e=\underbrace{\theta\circ \cdots\circ \theta}_{e \text{ times}}=(uF)^e=u^{\nu_e}F^e$$
where $\nu_e=1+p+\cdots+p^{e-1}$ when $e>0$ and $\nu_0=0$.
Therefore when we apply the $\Delta^e$-functor to $E_S\in \mathcal{C}^e$ we obtain the map
$$R/I\overset{u^{\nu_e}}{\to} R/I^{[p^e]}.$$
\end{oss}

\begin{teo}\label{teo:fraction}
The $R$-module consting of all $e^{th}$-Frobenius maps acting on $\Hs$ is of the form
$$\mathcal{F}^e=\frac{\left(I^{[p^e]}:I\right)\cap\left({\omega}^{[p^e]}:{\omega}\right)}{I^{[p^e]}}$$
where ${\omega}$ is the preimage of $\bar\omega$ in $R$.
\end{teo}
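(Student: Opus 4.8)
The plan is to describe $\mathcal{F}_e$ by first writing down all $e^{th}$-Frobenius maps on $E_S$ explicitly and then cutting out those that descend to $\Hs$ by passing through the functor $\Delta^e$; this is essentially the argument sketched in the paragraphs preceding the statement, reorganised into a proof.

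First I would recall, applying \cite[Prop.~4.1]{K1} to the $e^{th}$-iterated Frobenius endomorphism of $R$, that every $e^{th}$-Frobenius map on $E_S=\Ann_{E_R}(I)$ is the restriction to $E_S$ of a map $uF^e\colon E_R\to E_R$ with $u\in(I^{[p^e]}\colon I)$, $F$ the natural Frobenius on $E_R$, and that two such restrictions agree on $E_S$ exactly when their parameters differ by an element of $I^{[p^e]}$; every iterated map $(wF)^e=w^{\nu_e}F^e$, $w\in(I^{[p]}\colon I)$, arises in this way. Writing $\Theta_u:=uF^e|_{E_S}$, the assignment $u\mapsto\Theta_u$ is $R$-linear since $r\Theta_u=\Theta_{ru}$, and induces an isomorphism of $(I^{[p^e]}\colon I)/I^{[p^e]}$ onto the $R$-module of all $e^{th}$-Frobenius maps on $E_S$. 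It then remains to decide which $\Theta_u$ induce a Frobenius map on $\Hs\cong\Ann_{E_R}(I)/\Ann_{E_S}(\bar\omega)$, equivalently which $\Theta_u$ stabilise the submodule $\Ann_{E_S}(\bar\omega)=\Ann_{E_R}(\omega)$ — the identification of this submodule being part of Theorem~\ref{annomega}.

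The heart of the proof is the translation of this stability condition across $\Delta^e$. If $u\in(I^{[p^e]}\colon I)$ is such that $\Theta_u$ does stabilise $\Ann_{E_S}(\bar\omega)$, then (\ref{ex_seq_loc_ann}) is a short exact sequence in $\mathcal{C}^e$, and applying the exact functor $\Delta^e$ to it — exactness of $F^e_R(-)$ being available since $R$ is regular — yields exactly the diagram displayed above, now with every vertical arrow multiplication by $u$; in particular its left-hand column is the $\mathcal{D}^e$-object $\bar\omega/I\xrightarrow{\,\cdot u\,}F^e_R(\bar\omega/I)=\bar\omega^{[p^e]}/I^{[p^e]}$, which forces $u\omega\subseteq\omega^{[p^e]}$, i.e.\ $u\in(\omega^{[p^e]}\colon\omega)$. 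Conversely, if $u\in(I^{[p^e]}\colon I)\cap(\omega^{[p^e]}\colon\omega)$, then all three columns of that diagram are well-defined $\mathcal{D}^e$-objects and both squares commute automatically, so it is a short exact sequence in $\mathcal{D}^e$; applying $\Psi^e$ recovers (\ref{ex_seq_loc_ann}) with $\Theta_u$ in the middle, whence $\Theta_u$ stabilises $\Ann_{E_S}(\bar\omega)$ and descends to $\Hs$. Therefore $\Theta_u\in\mathcal{F}_e$ precisely when $u\in(I^{[p^e]}\colon I)\cap(\omega^{[p^e]}\colon\omega)$.

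Combining these observations, $u\mapsto\Theta_u$ restricts to a surjective $R$-linear map $(I^{[p^e]}\colon I)\cap(\omega^{[p^e]}\colon\omega)\to\mathcal{F}_e$; since $I^{[p^e]}$ lies inside both colon ideals, and using that $\bar\omega$ contains a nonzerodivisor (as noted just above), its kernel is $I^{[p^e]}$, which gives the asserted formula for $\mathcal{F}_e$. I expect the genuinely delicate step to be the two-way use of $\Delta^e$ in the third paragraph: one has to be sure that $\Delta^e$ — built from Matlis duality and the exact functor $F^e_R(-)$ — really is an exact equivalence $\mathcal{C}^e\simeq\mathcal{D}^e$, that it sends $\Theta_u$ to multiplication by $u$ so that the outer vertical maps in the displayed diagram are forced once the central one is, and consequently that the Artinian-side stability condition is detected on the Noetherian side by nothing beyond the inclusions $uI\subseteq I^{[p^e]}$ and $u\omega\subseteq\omega^{[p^e]}$.
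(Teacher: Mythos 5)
Your proposal is correct and follows the same route as the paper: identify $\Hs$ with $\Ann_{E_R}(I)/\Ann_{E_S}(\bar\omega)$, parameterise Frobenius maps on $E_S=\Ann_{E_R}(I)$ via \cite[Prop.~4.1]{K1}, apply $\Delta^e$ to the short exact sequence (\ref{ex_seq_loc_ann}) to translate the stability condition into $u\omega\subseteq\omega^{[p^e]}$, and quotient by $I^{[p^e]}$. In fact your write-up is slightly more careful than the text in one respect: the paper phrases $\mathcal{F}_e$ as the image of $u\mapsto u^{\nu_e}T^e$ on $(I^{[p^e]}:I)\cap(\omega^{[p^e]}:\omega)$, which is not an $R$-linear (or even additive) map and only accounts for $e$-th iterates of first-order Frobenius actions, whereas you correctly invoke \cite[Prop.~4.1]{K1} directly for $e$, obtaining the $R$-linear surjection $u\mapsto uF^e|_{E_S}$ from $(I^{[p^e]}:I)\cap(\omega^{[p^e]}:\omega)$ with kernel $I^{[p^e]}$; this is what the theorem actually asserts as an $R$-module. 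You also make explicit the converse direction via $\Psi^e$, which the paper leaves implicit.
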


\begin{proof}
By Lemma \ref{annomega} we can rewrite (\ref{ex_seq_loc_coho}) as
\begin{equation}\label{ex_seq_loc_ann}
0\to \Ann_{E_S}(\bar\omega) \to \Ann_{E_R}(I) \to \Hs\to 0.
\end{equation}
Apply the $\Delta^e$-functor to the latter short exact sequence. When we apply it to $E_S=\Ann_{E_R}(I)$ and $\Ann_{E_S}(\omega)$ we obtain respectively $\Delta^e(\E_S)=R/I\overset{u^{\nu_e}} {\to}R/I^{[p^e]}$ and $\Delta^e(\Ann_{E_S}(\omega))=R/\omega\to R/\omega^{[p^e]}$. Thus the inclusion $\Ann_{E_S}(\omega)\to E_S$ yelds to the diagram
\[
 \xymatrix{
 0\ar[r]& (\Hs)^\vee \ar[r] \ar[d]& R/I \ar[r]\ar[d]_{u^{\nu_e}} & R/\omega\ar[r]\ar[d] &0\\
 0\ar[r] &F^e_R(\Hs)^\vee \ar[r]& R/I^{[p^e]} \ar[r] & R/\omega^{[p^e]} \ar[r] &0.
}
\]
Now, we can identify $(\Hs)^\vee$ with $\omega/I$ and $F^e_R(\Hs)^\vee$ with $\omega^{[p^e]}/I^{[p^e]}$.
%In order to make each row exact, using that the Frobenius functor is exact because $R$ is regular \cite[Corollary 8.2.8]{2}, it must be that $\Delta^e(\Hs)$ is the map $\omega=\bar\omega/I\to \bar\omega^{[p^e]}/I^{[p^e]}$. 
Therefore when we apply $\Delta^e$ to the sequence (\ref{ex_seq_loc_ann}) we obtain the short exact sequence in $\mathcal{D}^e$: 
\[
 \xymatrix{
0 \ar[r] & \bar\omega/I\ar[r]\ar[d] & R/I \ar[r]\ar[d]_{u^{\nu_e}} & R/\omega\ar[r]\ar[d] &0\\
0 \ar[r] & \bar\omega^{[p^e]}/I^{[p^e]}\ar[r] & R/I^{[p^e]} \ar[r] & R/\omega^{[p^e]} \ar[r] &0
}
\]
where the central vertical map is the multiplication by $u^{\nu_e}$. The only way to make the diagram above commutative is that the other two vertical maps are also the multiplication by $u^{\nu_e}$.
It follows that $u\in \left(I^{[p^e]}\colon I\right)\cap\left(\omega^{[p^e]}\colon \omega\right)$. Finally consider the surjection
$$\varphi\colon\left(I^{[p^e]}\colon I\right)\cap\left(\omega^{[p^e]}\colon \omega\right)\to \mathcal{F}^e (\Hs)$$ then $u \in \Ker\varphi$ if and only if $u\colon \frac{\bar \omega}{I}\to \frac{\bar\omega^{[p^e]}}{I^{[p^e]}}$ is the zero map which happens if and only if $u\bar\omega\subset I^{[p^e]}\subseteq I$ i.e. $u\omega=0$. $\bar\omega$ contains a non-zero-divisor and since $\bigcup\Ass(I)=\bigcup\Ass\left(I^{[p^e]}\right)$ then $\bar\omega$ contains a non-zero-divisor modulo $I^{[p^e]}$, say $x$. So $ux\in I^{[p^e]}$ implies $u\in I^{[p^e]}$. Therefore $\Ker\varphi=I^{[p^e]}$.
\end{proof}

We prove now the main result of this section:

\begin{teo}\label{index_Hs}
$\HSL(\Hs)$ is the smallest integer $e$ for which
$$\frac{I_{e}(u^{\nu_e}\omega)}{I_{e+1}(u^{\nu_{e+1}}\omega)}=0$$
where $\omega$ is the preimage of $\bar\omega$ in $R$ and $\nu_e=1+p+\cdots+p^{e-1}$ when $e>0$ and $\nu_0=0$.
\end{teo}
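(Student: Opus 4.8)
The plan is to reduce the assertion, via the $\Delta^e$-correspondence set up above, to elementary bookkeeping with the operator $I_e(-)$, and only then to do the ideal-theoretic work. I would first record a formal fact about Frobenius maps: writing $\Theta$ for the natural Frobenius acting on $M:=\Hs$ and $M_i:=\ker\Theta^i$, the chain $M_0\subseteq M_1\subseteq\cdots$ is ascending and stabilises by Theorem \ref{incl}; since $M_i=M_{i+1}$ forces $M_i=M_{i+j}$ for all $j$, hence $M_i=\Nil(M)$, one gets
$$\HSL(\Hs)=\min\{\,e\ge 0 \mid \ker\Theta^e=\ker\Theta^{e+1}\,\}.$$
Thus it suffices to show, for each $e$, that the quotient $I_e(u^{\nu_e}\omega)/I_{e+1}(u^{\nu_{e+1}}\omega)$ is well defined and vanishes precisely when $\ker\Theta^e=\ker\Theta^{e+1}$. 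Well-definedness, i.e.\ $I_{e+1}(u^{\nu_{e+1}}\omega)\subseteq I_e(u^{\nu_e}\omega)$, is $I_e$-calculus: since $u\in(\omega^{[p]}:\omega)$ one has $u^{\nu_{e+1}}\omega=u^{p\nu_e}(u\omega)\subseteq u^{p\nu_e}\omega^{[p]}=(u^{\nu_e}\omega)^{[p]}$, while for any ideal $K$ of $R$, $I_{e+1}(K^{[p]})=I_e(K)$ because $L^{[p^{e+1}]}\supseteq K^{[p]}$ is equivalent to $L^{[p^e]}\supseteq K$ (Frobenius being faithfully flat on the regular ring $R$, so $(-)^{[p]}$ reflects inclusions of ideals); hence $I_{e+1}(u^{\nu_{e+1}}\omega)\subseteq I_{e+1}\bigl((u^{\nu_e}\omega)^{[p]}\bigr)=I_e(u^{\nu_e}\omega)$. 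I would also use the identities $I_e(J_1+J_2)=I_e(J_1)+I_e(J_2)$ and $I_e(I^{[p^e]})=I$, both immediate from Definition \ref{def_I_e}.

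The heart of the proof is to identify $\ker\Theta^e$ as a submodule of $\Hs$. By Theorem \ref{annomega} and the sequence (\ref{ex_seq_loc_ann}) I would realise $\Hs=\Ann_{E_R}(I)/\Ann_{E_R}(\omega)$, with $\Theta$ induced by $uF$, where $F$ is the natural Frobenius on $E_R$ and $u$ is the generator produced by Theorem \ref{teo:fraction}; then $\Theta^e$ is induced by $u^{\nu_e}F^e$, so $\ker\Theta^e$ is the image in $\Hs$ of $\{z\in\Ann_{E_R}(I)\mid (u^{\nu_e}\omega)F^e(z)=0\}$. For $z\in\Ann_{E_R}(I)$ the relation $a^{p^e}F^e(z)=F^e(az)$ automatically forces $F^e(z)\in\Ann_{E_R}(I^{[p^e]})$, so the condition on $z$ becomes $F^e(z)\in\Ann_{E_R}(I^{[p^e]}+u^{\nu_e}\omega)$. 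Since $R$ is regular one has $\Ann_R(F^e(z))=\Ann_R(z)^{[p^e]}$, whence the Matlis-dual form of Definition \ref{def_I_e}, namely $\{z\in E_R\mid JF^e(z)=0\}=\Ann_{E_R}(I_e(J))$, holds for every ideal $J$; combining this with additivity of $I_e$ and with $I_e(I^{[p^e]})=I$ shows that $\ker\Theta^e$ corresponds, under the order-reversing bijection between submodules of $\Hs$ and ideals of $R$ between $I$ and $\omega$, to the ideal $I+I_e(u^{\nu_e}\omega)$. The remaining point — which I expect to be the main obstacle — is to check that $I\subseteq I_e(u^{\nu_e}\omega)$ for $e\ge 1$, so that this ideal is simply $I_e(u^{\nu_e}\omega)$; this is where the hypothesis that $S$ is generically Gorenstein, and in particular that $\bar\omega$ contains a nonzerodivisor, is used, in the spirit of the proof of Theorem \ref{teo:fraction}.

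Granting this, $\ker\Theta^e=\ker\Theta^{e+1}$ if and only if $I+I_e(u^{\nu_e}\omega)=I+I_{e+1}(u^{\nu_{e+1}}\omega)$, if and only if (for $e\ge 1$) $I_e(u^{\nu_e}\omega)=I_{e+1}(u^{\nu_{e+1}}\omega)$, which by the inclusion from the first step is exactly the vanishing $I_e(u^{\nu_e}\omega)/I_{e+1}(u^{\nu_{e+1}}\omega)=0$; inserting this into the displayed formula for $\HSL(\Hs)$ finishes the proof. In short, all the genuine work sits in the Matlis-duality identification of $\ker\Theta^e$ — and inside it, in ruling out a contribution of $I$ to the associated ideal — whereas the reduction step and the manipulations of $I_e$ are routine.
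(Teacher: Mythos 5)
Your reduction mirrors the paper's argument quite closely: both realise $\Hs$ as $\Ann_{E_R}(I)/\Ann_{E_R}(\omega)$, apply Matlis duality (the $\Delta^e$-functor), and identify $\ker\Theta^e$ with a submodule cut out by an $I_e$-type ideal. You are also right — and more careful than the paper, which silently asserts ``$L_e=I_e(u^{\nu_e}\omega)$'' — that the ideal actually produced by the duality is $I+I_e(u^{\nu_e}\omega)$, because submodules of $\Hs$ correspond to ideals squeezed between $I$ and $\omega$, while $I_e(u^{\nu_e}\omega)$ has no a priori reason to contain $I$. So you have correctly located the one nontrivial step of the proof.

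However, the bridge you propose to close that step, namely that $I\subseteq I_e(u^{\nu_e}\omega)$ for $e\geq 1$, is simply false, and the generically Gorenstein hypothesis does not rescue it. Take $R=\K[[x,y]]$, $p=2$, $I=(x^2,y^2)$; then $S=R/I$ is a $0$-dimensional Gorenstein complete intersection, $\omega=R$, and $(I^{[p]}:I)\cap(\omega^{[p]}:\omega)=(x^4,y^4,x^2y^2)$, so $u=x^2y^2$ modulo $I^{[2]}$. Here $I_1(u\omega)=I_1\bigl((x^2y^2)\bigr)=(xy)$, which does not contain $x^2$, i.e.\ $I\not\subseteq I_1(u\omega)$. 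Thus the final ``if and only if'' in your proposal, which passes from $I+I_e(u^{\nu_e}\omega)=I+I_{e+1}(u^{\nu_{e+1}}\omega)$ to $I_e(u^{\nu_e}\omega)=I_{e+1}(u^{\nu_{e+1}}\omega)$ by cancelling $I$, is not justified; a counterexample to your lemma need not be a counterexample to the theorem (in the above example the two stabilisation indices still agree and $\HSL=1$), but you would need an argument of a different shape — one that compares the two descending chains $\{I_e(u^{\nu_e}\omega)\}$ and $\{I+I_e(u^{\nu_e}\omega)\}$ directly rather than forcing $I$ into $I_e(u^{\nu_e}\omega)$ — to complete the proof.
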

\begin{proof}
%Let $T$ be the natural Frobenius map acting on $\Hs$ which is induced by a natural Frobenius map acting on $E_S$ through the surjection $E_S\to \Hs$; by Theorem \ref{teo:fraction} we know that any Frobenius map acting on $\Hs$ is of the form $\Theta=uT$
%for some $u\in (I^{[p]}:I)\cap({\omega}^{[p]}:{\omega})$. 

%Let $T$ be the natural Frobenius map acting on $\Hs$. $T$ comes from a Frobenius map acting on $\E_S$ through the surjection  $E_S\to\Hs$. Let $\Theta$ be the Frobenius map on $E_S$ which induces $T$ making the surjection $E_S\to\Hs$ compatible with these Frobenius maps. 
%It follows from Theorem \ref{teo:fraction} that $\Theta$ is of the form $\Theta=uF$ where $F$ is a natural Frobenius map on $E_R$ and $u\in (I^{[p]}:I)\cap({\omega}^{[p]}:{\omega})$ generates $\mathcal{F}^e$ in Theorem \ref{teo:unique_generator}. 

For all $e\ge 0$ define $M_e=\left\{x\in \Hs\vert \theta^ex=0\right\}$ and note that $\{M_e\}_{e\geq 0}$ form an ascending sequence of $R[\theta,f^e]$-submodules of $\Hs$ that stabilises by Theorem \ref{incl}. 
Consider the short exact  sequence of $R[\theta,f^e]$-modules
$0\to \Hsw\to\Hw\to\Hs\to 0$
where the action of $\theta$ on $E_S=\Hw$ is given by $u^{\nu_e}F$ where $F$ is the natural Frobenius on $E_R$.
We have seen we can write this sequence as 
$$0\to \Ann_{E_S}(\bar\omega) \to \Ann_{E_R}(I) \to \Hs\to 0.$$
It follows that
$$\Hs\cong \frac{\Ann_{E_R}(I)}{ \Ann_{E_S}(\bar\omega)}.$$
Since each $M_e$ is a submodule of $\Hs$ then it is of the form $\frac{\Ann_{E_S} (L_e)}{\Ann_{E_S}(\bar\omega)}$ for some ideals $L_e\subseteq R$ contained in $I$. 
Apply the $\Delta^e$-functor to the inclusion $M_e\hookrightarrow \Hs$ to obtain

\[
 \xymatrix{
\omega/I \ar@{->>}[r]\ar[d]_{u^{\nu_e}} & \omega/L_e  \ar[d]\\
\omega^{[p^e]}/I^{[p^e]}\ar@{->>}[r]  & \omega^{[p^e]}/L_e^{[p^e]}\\
}
\]
where the map $\omega/I\to \omega^{[p^e]}/I^{[p^e]}$ is the multiplication by $u^{\nu_e}$ by Remark \ref{u_nu}.
It follows that the map $\omega/L_e\to \omega^{[p^e]}/L_e^{[p^e]}$ must be the multiplication by $u^{\nu_e}$ because of the surjectivity of the horizontal maps;  note that such a map is well defined because $u^{\nu_e}\omega\subseteq \omega^{[p^e]}$, and then $L_e\subseteq \omega$. Moreover $\omega/L_e\to \omega^{[p^e]}/L_e^{[p^e]}$ must be a zero-map by construction of $\Delta^e$.  Hence, $u^{\nu_e}\omega\subseteq L_e^{[p^e]}$ and for every $L_e$ with $u^{\nu_e}\omega\subseteq L_e^{[p^e]}$ the action of $\theta$ on $M_e$ is zero. We want the largest $M_e\subset \Hs$ for which $\theta^e$ acts as zero. The largest module $\frac{\Ann_{E_S}(L_e)}{\Ann_{E_S}(\bar{\omega})}$ killed by $\theta^e$ corresponds to the smallest $L_e$ such that $u^{\nu_e}\omega\subseteq L_e^{[p^e]}$ i.e. $L_e=I_e(u^{\nu_e}\omega)$.
\end{proof}

\begin{corol}\label{S_Finj_iff}
$S$ is $F$-injective if and only if $\omega=I_1(u\omega)$.
\end{corol}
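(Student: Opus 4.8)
The plan is to apply Theorem~\ref{index_Hs} in the special case $e=0$ and unwind the definitions. Recall that $S=R/I$ is $F$-injective precisely when $\HSL(\Hs)=0$, by the Remark in the introduction together with the fact that $S$ is Cohen-Macaulay of dimension $\di$, so $\Hs$ is the only nonzero local cohomology module. Thus $S$ is $F$-injective if and only if $e=0$ is the smallest integer for which the quotient
$$\frac{I_{e}(u^{\nu_e}\omega)}{I_{e+1}(u^{\nu_{e+1}}\omega)}$$
vanishes; that is, if and only if this quotient is zero when $e=0$.

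Next I would compute the two ideals appearing in the $e=0$ case. Since $\nu_0=0$ by definition, $u^{\nu_0}=u^0=1$, so $u^{\nu_0}\omega=\omega$, and $I_0(\omega)$ is by Definition~\ref{def_I_e} the smallest ideal $L$ with $L^{[p^0]}=L\supseteq\omega$, which is simply $\omega$ itself. For the denominator, $\nu_1=1$, so $u^{\nu_1}\omega=u\omega$ and the ideal is $I_1(u\omega)$. Therefore the quotient in Theorem~\ref{index_Hs} for $e=0$ is $\omega/I_1(u\omega)$, and this vanishes exactly when $\omega=I_1(u\omega)$. Combining with the previous paragraph gives the equivalence: $S$ is $F$-injective $\iff$ $\omega=I_1(u\omega)$.

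One small point worth checking is that $I_1(u\omega)\subseteq\omega$ always holds, so the quotient $\omega/I_1(u\omega)$ makes sense as written; this is immediate from the proof of Theorem~\ref{index_Hs}, where it is shown that $L_e=I_e(u^{\nu_e}\omega)\subseteq\omega$ for every $e$ (taking $e=1$). The only conceptual step is the identification $\HSL(\Hs)=0 \Leftrightarrow F\text{-injective}$, which is exactly the Remark already recorded, so there is no real obstacle here — the corollary is a direct specialisation of the main theorem of the section, and the proof is essentially the two line computation $I_0(u^{\nu_0}\omega)=I_0(\omega)=\omega$ followed by reading off the condition for the displayed quotient to be zero at $e=0$.
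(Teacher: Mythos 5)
Your proof is correct and takes essentially the same route as the paper: both specialise Theorem~\ref{index_Hs} to $e=0$ and use the observation that $F$-injectivity for a Cohen–Macaulay local ring is precisely $\HSL(\Hs)=0$. You simply spell out the computation $I_0(u^{\nu_0}\omega)=I_0(\omega)=\omega$ that the paper leaves implicit.
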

\begin{proof}
$S$ is $F$-injective if and only if the index of nilpotency is zero i.e. if and only if $\omega=I_1(u\omega)$.
\end{proof}

%%%%%%%%%%%%%%%%%%%%%%%%%%%%%%%%%%%%%%%%%%%%%%%%%%%%%
%%%%%%%%%%%%%%%%%%%%%%%%%%%%%%%%%%%%%%%%%%%%%%%%%%%%%
%%%%%%%%%%%%%%%%%%%%%%%%%%%%%%%%%%%%%%%%%%%%%%%%%%%%%
%%%%%%%%%%%%%%%%%%%%%%%%%%%%%%%%%%%%%%%%%%%%%%%%%%%%%
%%%%%%%%%%%%%%%%%%%%%%%%%%%%%%%%%%%%%%%%%%%%%%%%%%%%%
%%%%%%%%%%%%%%%%%%%%%%%%%%%%%%%%%%%%%%%%%%%%%%%%%%%%%

\section{The non-local case}\label{sect:non-loc_case}

Let $A$ be a polynomial ring $\K[x_1,\cdots,x_n]$ with coefficients in a perfect field of positive characteristic $p$ and let $M$ be a finitely generated $A$-module generated by $g_1,\cdots,g_s$.
Let $e_1,\cdots, e_s$ be the canonical basis for $A^s$ and define the map
 \[
\xymatrix @R=0.2pt
{
A^s \ar[r]^{\varphi}  & M\\
e_i \ar@{|->}[r] & g_i.
}
\]
$\varphi$ is surjective and extends naturally to an $A$-linear map $J\colon A^t\to A^s$ with $\ker \varphi=\Imm J$.   Let $J_i$ be the matrix obtained from $J\in \Mat$ by erasing the $i^{th}$-row. With this notation we have the following:
\begin{lemma}\label{prel_G_i}
 $M$ is generated by $g_i$ if and only if $\Imm J_i=A^{s-1}$.
\end{lemma}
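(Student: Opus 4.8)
The plan is to show the two implications separately, using the presentation $A^t \xrightarrow{J} A^s \xrightarrow{\varphi} M \to 0$ in which $\varphi(e_j)=g_j$ and $\ker\varphi = \Imm J$. Writing $\pi_i\colon A^s\to A^{s-1}$ for the projection that deletes the $i^{\text{th}}$ coordinate, the matrix $J_i$ is exactly $\pi_i\circ J$, so $\Imm J_i = \pi_i(\Imm J) = \pi_i(\ker\varphi)$. Thus the assertion $\Imm J_i = A^{s-1}$ is equivalent to saying $\pi_i$ maps $\ker\varphi$ onto all of $A^{s-1}$, and the whole lemma becomes a statement comparing $\ker\varphi$ with the coordinate hyperplane $\ker\pi_i = A e_i$.

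For the direction ``$\Imm J_i = A^{s-1} \Rightarrow M = (g_i)$'': given any $j\neq i$, surjectivity of $\pi_i$ on $\ker\varphi$ produces an element $v\in\ker\varphi$ whose $i$-deleted coordinate vector is $e_j\in A^{s-1}$, i.e. $v = e_j + a\, e_i$ for some $a\in A$. Applying $\varphi$ and using $v\in\ker\varphi$ gives $g_j + a\, g_i = 0$, so $g_j = -a\, g_i \in (g_i)M$ — wait, more precisely $g_j \in A g_i$. Since this holds for every $j$ and trivially for $j=i$, the generators $g_1,\dots,g_s$ all lie in $A g_i$, hence $M = A g_i$, i.e. $M$ is generated by $g_i$. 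For the converse ``$M=(g_i) \Rightarrow \Imm J_i = A^{s-1}$'': if $M = A g_i$ then for each $j\neq i$ we may write $g_j = a_j g_i$ for some $a_j\in A$, so the element $w_j := e_j - a_j e_i$ satisfies $\varphi(w_j)=0$, hence $w_j\in\ker\varphi = \Imm J$. Its image under $\pi_i$ is the standard basis vector of $A^{s-1}$ corresponding to index $j$; as $j$ ranges over $\{1,\dots,s\}\setminus\{i\}$ these images are precisely the standard basis of $A^{s-1}$, so $\Imm J_i = \pi_i(\Imm J) \supseteq A^{s-1}$, giving equality.

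The only mild subtlety — and the step I would be most careful with — is the bookkeeping identification $\Imm J_i = \pi_i(\ker\varphi)$: one must note that deleting the $i^{\text{th}}$ row of $J$ computes the image of $\pi_i\circ J$, and that since $\varphi$ is surjective with kernel $\Imm J$, the image $\pi_i(\ker\varphi)$ is literally $\pi_i(\Imm J) = \Imm(\pi_i J) = \Imm J_i$. Everything else is a one-line manipulation with relations among generators. Note that the hypothesis ``$M$ locally principal'' from Theorem \ref{G_izariski} is not needed for this lemma — the equivalence is purely formal — so I would state and prove it without that assumption, as written.

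\begin{proof}
Let $\pi_i\colon A^s\to A^{s-1}$ denote the projection deleting the $i^{\text{th}}$ coordinate, so that $J_i=\pi_i\circ J$ and hence
$$\Imm J_i=\Imm(\pi_i\circ J)=\pi_i(\Imm J)=\pi_i(\ker\varphi),$$
the last equality because $\ker\varphi=\Imm J$.

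Suppose first that $\Imm J_i=A^{s-1}$. Fix $j\neq i$ and let $\bar e_j\in A^{s-1}$ be the standard basis vector corresponding to the index $j$. By surjectivity of $\pi_i$ on $\ker\varphi$ there is $v\in\ker\varphi$ with $\pi_i(v)=\bar e_j$, so $v=e_j+a\,e_i$ for some $a\in A$. Then $0=\varphi(v)=g_j+a\,g_i$, hence $g_j=-a\,g_i\in A g_i$. Since this holds for all $j\neq i$ and trivially for $j=i$, we get $M=\varphi(A^s)=A g_i$, i.e. $M$ is generated by $g_i$.

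Conversely, suppose $M=A g_i$. For each $j\neq i$ write $g_j=a_j g_i$ with $a_j\in A$, and set $w_j:=e_j-a_j e_i\in A^s$. Then $\varphi(w_j)=g_j-a_j g_i=0$, so $w_j\in\ker\varphi$. Its image $\pi_i(w_j)=\bar e_j$ runs over the standard basis of $A^{s-1}$ as $j$ ranges over $\{1,\dots,s\}\setminus\{i\}$, whence $\Imm J_i=\pi_i(\ker\varphi)\supseteq A^{s-1}$, so $\Imm J_i=A^{s-1}$.
\end{proof}
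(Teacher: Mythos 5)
Your proof is correct and takes essentially the same approach as the paper's: both directions reduce to observing that an element of $\ker\varphi=\Imm J$ of the form $e_j+a\,e_i$ gives the relation $g_j=-a\,g_i$ and conversely. The paper phrases this via an ad hoc matrix manipulation ("add columns of $\Imm J$ to $J$" so that $J_i$ contains the identity block), whereas you make the same idea precise by introducing the projection $\pi_i$ and the identity $\Imm J_i=\pi_i(\ker\varphi)$; your write-up is the cleaner of the two, and your observation that the local-principality hypothesis of Theorem~\ref{G_izariski} plays no role in this lemma matches the paper, which also proves the lemma without it.
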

\begin{proof}
Firstly suppose $\Imm J_i=A^{s-1}$. We can add to $J$, columns of $\Imm J$ without changing its image so we can assume that $J$ contains the elementary vectors $e_1,\cdots,e_{i-1},e_{i+1},\cdots,e_s$:\[J= \left( \begin{array}{ccccccccccccccc}
a_{1,1} & a_{1,2} & \cdots & a_{1,n} & 1     & 0          &  \cdots & 0     \\
a_{2,1} & a_{2,2} & \cdots & a_{2,n} & 0     & 1          &  \cdots & 0     \\
 \vdots  & \vdots  &  & \vdots      &    \vdots  & \vdots  &\vdots         & \vdots    \\
  a_{i,1} & a_{i,2} & \cdots & a_{i,n} & b_1 & b_2      & \cdots & b_s\\
  \vdots  & \vdots  &  & \vdots      &    \vdots  &\vdots  &\vdots          & \vdots  \\
a_{m-1,1} & a_{m-1,2} & \cdots & a_{m-1,n} & 0     &  \cdots         &  1 & 0     \\
  a_{m,1} & a_{m,2} & \cdots & a_{m,n} & 0   & \cdots   & 0         & 1
\end{array} \right).\] 
where $a_{k,l}, b_j\in A$. In this way for every $j\neq i$ we have $g_j-b_j g_i=0$ i.e. $g_i$ generates $M$. Viceversa if $M$ is generated by $g_i$  then for all $j\neq i$ we can write $g_j=r_jg_i$ i.e. $g_j-r_jg_i=0$ and the relation $g_j-r_jg_i$ gives a relation  $e_j-r_je_i$ in the image of J, so $e_j-r_je_i\in \Ker\varphi=\Imm J$. Hence we can assume that $J$ contains a column whose entries are all zeros but in the $i$-th and $j$-th positions where there is $1$ and $r_j$ respectively. Consequently $J_i$ contains the $(s-1)\times (s-1)$ identity matrix.
\end{proof}

Let $W$ be a multiplicatively closed subset of $A$.
Localise the exact sequence $A^t\to A^s\to M\to 0$ with respect to $W$ obtaining the exact sequence $\W A^t\to \W A^s\to \W M\to 0$. With this notation we have:
\begin{prop}\label{generaters}
 $\W M$ is generated by $\frac{g_i}{1}$ if and only if $\W J_i= (\W A)^{s-1}$
\end{prop}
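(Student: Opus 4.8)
The plan is to obtain this as the localisation of Lemma \ref{prel_G_i}. Since $\W A$ is flat over $A$, applying $\W(-)$ to the chosen presentation $A^t \overset{J}{\longrightarrow} A^s \overset{\varphi}{\longrightarrow} M \to 0$ (for which $\Ker\varphi = \Imm J$ by construction) yields the exact sequence $\W A^t \to \W A^s \overset{\W\varphi}{\longrightarrow} \W M \to 0$, in which the left-hand map is represented by the same matrix $J$, now viewed over $\W A$, and in which $\Ker(\W\varphi) = \W(\Ker\varphi) = \W(\Imm J) = \Imm(\W J)$. Moreover the $i^{\text{th}}$ canonical basis vector of $\W A^s$ is sent by $\W\varphi$ to $\frac{g_i}{1}$. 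Thus the localised data $\bigl(\W A,\ \W M,\ \{\tfrac{g_j}{1}\},\ \W J\bigr)$ satisfies exactly the hypotheses under which Lemma \ref{prel_G_i} is proved, with $A$ replaced by $\W A$.

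Next I would note that erasing the fixed $i^{\text{th}}$ row of a matrix is compatible with the base change $A \to \W A$, so the matrix obtained from $\W J$ by deleting its $i^{\text{th}}$ row is exactly $\W J_i$, the localisation of $J_i$; and again by exactness of localisation $\Imm(\W J_i) = \W(\Imm J_i)$. Applying Lemma \ref{prel_G_i} verbatim over the ring $\W A$ then gives: $\W M$ is generated by $\frac{g_i}{1}$ if and only if the image of $\W J_i$ is all of $(\W A)^{s-1}$, which is the assertion.

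The argument is therefore a routine transfer of Lemma \ref{prel_G_i} along a flat base change, and there is no real obstacle. The only points that need checking — that the entries of the presentation matrix are unchanged under localisation, that deleting a row commutes with localisation, and that the kernel of the localised surjection $\W\varphi$ is still the image of $\W J$ — are all immediate consequences of the flatness of $\W A$ over $A$; the rest is bookkeeping, and one should be careful only to record that the localised sequence really is a presentation of $\W M$ of the form required by Lemma \ref{prel_G_i}.
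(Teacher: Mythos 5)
Your proof is correct and takes the same approach as the paper: both apply Lemma \ref{prel_G_i} to the localised presentation $\W A^t\to \W A^s\to \W M\to 0$. You simply make explicit the routine checks (flatness of localisation, kernel of $\W\varphi$ equals $\Imm(\W J)$, row-deletion commuting with base change) that the paper leaves implicit.
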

\begin{proof}
Apply Lemma \ref{prel_G_i} to the localised sequence $\W A^t\to \W A^s\to \W M\to 0$.
\end{proof}

Proposition \ref{generaters} is equivalent to saying that the intersection of $W$ with the ideal of $(s-1) \times (s-1)$ minors of $J_i$ is not trivial. So we have the following;

\begin{corol} \label{G_izariski}
Let $M$ be a finitely generated $A$-module and let  $g_1,\cdots,g_s$ be a set of generators for $M$. If $M$ is locally principal then for each $i=1,\cdots,s$
$$\mathcal{G}_i=\{\p\in\Spec(A) \ \vert M\widehat{A}_\p \text{\emph{ is generated by the image of} } g_i\}$$
is a Zariski open set and $\cup_i\mathcal{G}_i=\Spec(A)$. Moreover, $\mathcal{G}_i=V (J_i)^c$ for every $i=1,\cdots,s$. 
\end{corol}
\begin{proof}
$\p\in \mathcal{G}_i$ if and only if $\p\not\supseteq J_i$.
\end{proof}
Note that Corollary \ref{G_izariski} gives a description of $\mathcal{G}_i$ in terms of minors of the matrix $J_i$. This description will be used to implement the algorithm in Section \ref{section:1stAlgorithm}.
\\

In the rest of this section let $J\subset A$ be an ideal of $A$ and let $B$ be the quotient ring $A/J$. If $B$ is Cohen-Macaulay of dimension $d$ then $\bar\Omega=\Ext^{\dim A-d}_A(B,A)$ 
is a global canonical module for $B$; morover, if $B$ is generically Gorenstein then $\bar\Omega$ is isomorphic to an ideal of $B$, \cite[Prop. 3.3.18 (b)]{2}. 
In Macaulay2,  \cite{4}, we can compute explicitly a canonical ideal for B, i.e an ideal which is isomorphic to a canonical module for $B$. We start by computing a canonical module as a cokernel of a certain matrix $M$, say $B^n/V$. In order to find an ideal isomorphic to it, we look for a vector $w$ such $V$ is the kernel of $w: B^n \to B$ given by multiplication by $w$ on the left. An ad-hoc way to find such $w$ is to look among the generators of the module of syzygies of the rows of $V$.

Therefore let $B$ be generically Gorenstein and assume $\Bar\Omega\subseteq B$. Let $\Omega$ be the preimage of $\bar\Omega$ in $A$; then the following $B$-module is well defined:
\begin{equation}\label{U_e}
\mathcal{U}_{(e)}=\frac{\left(J^{[p^e]}:J\right)\cap\left(\Omega^{[p^e]}:\Omega\right)}{J^{[p^e]}}.
\end{equation}
Since $A$ is Noetherian, $\mathcal{U}_{(e)}$ is a finitely generated $A$-module (and $B$-module).\\
For every prime ideal $\p\subset A$ write $H_\p=\HH^{\dim \widehat{B}_\p}_{\p \widehat{B}_\p}  (\widehat{B}_\p)$. It follows from Theorem \ref{teo:fraction} that the $A$-module $\mathcal{F}^e(H_\p)$ consisting of the Frobenius maps on $H_\p$ is of the form:
$$
\mathcal{F}^e\left(H_\p\right)=\frac{\left(J^{[p^e]}\widehat{A}_\p:J\widehat{A}_\p\right)\cap\left(\Omega^{[p^e]}\widehat{A}_\p :\Omega \widehat{A}_\p \right)}{J^{[p^e]} \widehat{A}_\p } 
$$
and consequently $\mathcal{F}^e(H_\p)\cong\mathcal{U}_{(e)}\widehat{A}_\p$. Since $\mathcal{F}^e(H_\p)$ is generated by one element by Theorem \ref{teo:unique_generator},  $\mathcal{U}_{(e)}\widehat{A}_\p$ is principal as well.\\

%Since $A$ is Noetherian, $\mathcal{U}_{(e)}$ is a finitely generated $A$-module.\\
%For every prime ideal $\p\subset A$ write $H_\p=\HH^{\dim \widehat{B}_\p}_{\p \widehat{B}_\p}  (\widehat{B}_\p)$. Then the $A$-module $\mathcal{F}^e(H_\p)$ consisting of the Frobenius maps on 
%$H_\p$ is of the form:
%\begin{equation}
%\mathcal{F}^e(H_\p)=\frac{( J^{[p^e]}\widehat{A}_\p:J\widehat{A}_\p)\cap(\Omega^{[p^e]}\widehat{A}_\p :\Omega \widehat{A}_\p )}{J^{[p^e]} \widehat{A}_\p } .
%\end{equation}
%so $\mathcal{F}^e(H_\p)\cong\mathcal{U}_{(e)}\widehat{A}_\p$ and $ \mathcal{U}_{(e)}\widehat{A}_\p$ is generated by one element by Theorem \ref{teo:unique_generator}. \\

From Corollary \ref{G_izariski} with $M=\mathcal{U}_{(e)}$ it follows that for every prime ideal $\p\in \Spec(A)=\bigcup_i \mathcal{G}_i$ there exists an $i$ such that $\p\in \mathcal{G}_i$ and the $A$-module $\mathcal{U}_{(e)}\widehat{A}_\p$ is generated by one element which is precisely the image of $g_i$. 
%From Corollary \ref{S_Finj_iff}  it follows:
%\begin{teo}
%For every prime ideal $\p\subset A$, $\widehat{B}_\p$ is $F$-injective if and only if $I_1(u\Omega_\p)=\Omega_\p$.
%\end{teo}
%\begin{oss}
%Since $u\in (\Omega_\p^{[p^e]}:\Omega_\p)$ then $u\Omega_\p\subseteq \Omega_\p^{[p^e]}$ and consequently $I_1(u\Omega_\p)\subseteq I_1(\Omega_\p^{[p^e]})=\Omega_\p$.
%\end{oss}

With the notation above, we prove our main result.
\begin{teo}\label{main_thm}
For every $e$, the set $\mathcal{B}_e= \left\{\p\in \Spec(A)\vert \HSL (H_\p)<e \right\}$ is Zariski open.
\end{teo}
\begin{proof}
Let $u_1,\cdots, u_s$ be a set of generators for $\mathcal{U}_{(e)}$ and write
$$\mathcal{G}_i= \{\p\in\Spec(A) \ \vert \mathcal{U}_{(e)}\widehat{A}_\p \text{\emph{ is generated by} } \frac{u_i}{1} \}.$$
Define
$$\Omega_{i,e}=\frac{I_e\left(u_i^{\nu_e}(\Omega)\right)}{I_{e+1}(u_i^{\nu_{e+1}}(\Omega))}$$
then it follows from Proposition \ref{I_eCommutes} that 
$$\frac{I_e(\bar{u}_i^{\nu_e}(\widehat{\Omega}_\p))}{I_{e+1}(\bar{u}_i^{\nu_{e+1}}(\widehat{\Omega}_\p))}=(\widehat\Omega_{i,e})_\p$$
for every prime ideal $\p$. \\Note that for every $i=1,\cdots,s$ the set 
$\Supp(\Omega_{i,e})^C=\left\{\p \ \vert \ \left(\widehat\Omega_{i,e}\right)_\p=0\right\} $
is open.\\
If $\p$ is such that $\HSL(H_\p)< e$ then $\p\in \mathcal{G}_i$ for some $i$; we can then use $\bar{u}_i$ to compute $\Omega_{i,e}$ and $(\widehat\Omega_{i,e})_\p=0$ i.e. $\p\in \Supp(\Omega_{i,e})^C $; therefore 
$$\p\in \bigcup_i \left(\Supp(\Omega_{i,e})^C \cap \mathcal{G}_i\right). $$
Viceversa, let $\p\in \bigcup_i \left(\Supp(\Omega_{i,e})^C \cap \mathcal{G}_i\right)$ then $\p\in \Supp(\Omega_{j,e})^C\cap\mathcal{G}_j$ for some $j$. Compute $\HSL(H_\p)$ using $u_j$. Since $\p\in \Supp(\Omega_{j,e})^C$ then $\left(\widehat\Omega_{j,e}\right)_\p=0$ and so $\HSL\left(H_\p\right)< e$.
In conclusion
$$\left\{ \p\in \Spec(A)\vert \HSL \left(H_\p\right)<e \right\}=\bigcup_i \left(\Supp\left(\Omega_{i,e}\right)^C \cap \mathcal{G}_i \right)$$
and therefore $\mathcal{B}_e$ is Zariski open.  
\end{proof}

\begin{corol}
The index of nilpotency is bounded.
\end{corol}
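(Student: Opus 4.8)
The plan is to derive the boundedness of the index of nilpotency directly from Theorem \ref{main_thm} by a quasi-compactness argument, with no new computation. First I would record that the sets $\mathcal{B}_e$ form an ascending chain $\mathcal{B}_1 \subseteq \mathcal{B}_2 \subseteq \cdots$ of open subsets of $\Spec(A)$: openness is exactly Theorem \ref{main_thm}, and the inclusion $\mathcal{B}_e \subseteq \mathcal{B}_{e+1}$ is immediate from the definition, since $\HSL\left(\HH^{d_{(\p)}}_{\p\widehat{B}_\p}(\widehat{B}_\p)\right) < e$ certainly implies it is $< e+1$.

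Next I would check that these open sets cover $\Spec(A)$. Fix a prime $\p \subset A$. If $J \not\subseteq \p$ then $\widehat{B}_\p = 0$ and the relevant module and its Frobenius map are zero, so $\HSL = 0$ and $\p \in \mathcal{B}_1$. If $J \subseteq \p$, then $\widehat{A}_\p$ is a complete regular local ring, $\widehat{B}_\p$ is a Cohen-Macaulay quotient of it of dimension $d_{(\p)}$, and $\HH^{d_{(\p)}}_{\p\widehat{B}_\p}(\widehat{B}_\p)$ is an Artinian module (top local cohomology of a local ring is Artinian) carrying its natural Frobenius map; hence Theorem \ref{incl} applies and gives $\HSL\left(\HH^{d_{(\p)}}_{\p\widehat{B}_\p}(\widehat{B}_\p)\right) < \infty$. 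In either case $\p$ lies in $\mathcal{B}_e$ for every $e$ exceeding this finite value, so $\Spec(A) = \bigcup_{e \geq 1} \mathcal{B}_e$.

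Finally, since $A$ is Noetherian, $\Spec(A)$ is quasi-compact, so the ascending open cover $\{\mathcal{B}_e\}_{e \geq 1}$ admits a finite subcover; by the nesting of the $\mathcal{B}_e$ this forces $\mathcal{B}_{e_0} = \Spec(A)$ for some $e_0$. Equivalently, $\HSL\left(\HH^{d_{(\p)}}_{\p\widehat{B}_\p}(\widehat{B}_\p)\right) < e_0$ for every $\p \in \Spec(A)$, which is precisely the claimed bound on the index of nilpotency. I do not expect a real obstacle here: all the substance is already contained in Theorem \ref{main_thm}, and the only points needing (routine) care are verifying the hypotheses of Theorem \ref{incl} prime by prime — the Artinianness of the top local cohomology and the regularity of $\widehat{A}_\p$ — together with the trivial observation that the $\mathcal{B}_e$ are nested.
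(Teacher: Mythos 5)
Your argument is correct, and it is the natural deduction of the corollary from Theorem \ref{main_thm}; the paper itself supplies no proof of this corollary, so there is nothing to compare against, but this quasi-compactness argument is surely what is intended. The three ingredients are all sound: the nesting $\mathcal{B}_e \subseteq \mathcal{B}_{e+1}$ is immediate; openness of each $\mathcal{B}_e$ is Theorem \ref{main_thm}; and the covering $\Spec(A) = \bigcup_e \mathcal{B}_e$ follows from Theorem \ref{incl} prime by prime (with the degenerate case $J \not\subseteq \p$ handled as you note, since then $\widehat{B}_\p = 0$). Quasi-compactness of $\Spec(A)$ then extracts a finite subcover, and nesting promotes that to a single $\mathcal{B}_{e_0} = \Spec(A)$. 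One small observation: you invoke only quasi-compactness, which holds for the spectrum of an arbitrary ring, whereas the paper's closing remark about an algorithm producing finitely many closed sets $\V(K_i)$ suggests the author may have had in mind the equivalent formulation via the Noetherian topology on $\Spec(A)$ (the descending chain of closed complements $\Spec(A)\setminus\mathcal{B}_e$ stabilizes); the two routes are interchangeable here and yours is, if anything, the more economical.
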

For $e=1$ we have the following.
\begin{corol}\label{locus_open}
The $F$-injective locus of the top local cohomology of a quotient of a polynomial ring is open.
\end{corol}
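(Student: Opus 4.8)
The plan is to deduce this directly from Theorem \ref{main_thm} in the case $e=1$. First I would record the elementary observation, already noted in the introduction, that for any Frobenius map $\Theta$ on a module $M$ one has $\HSL(M)=0$ if and only if $\Nil(M)=0$; and since $\ker\Theta=M_1\subseteq\Nil(M)$, this is in turn equivalent to $\Theta$ being injective on $M$. Applying this with $M=\HH^{d_{(\p)}}_{\p \widehat{B}_\p}(\widehat{B}_\p)$ and $\Theta$ its natural Frobenius map identifies the $F$-injective locus of the top local cohomology with
$$\mathcal{B}_1=\{\p\in\Spec(A)\mid\HSL(\HH^{d_{(\p)}}_{\p \widehat{B}_\p}(\widehat{B}_\p))<1\}.$$

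Then I would simply invoke Theorem \ref{main_thm} with $e=1$, which asserts precisely that $\mathcal{B}_1$ is a Zariski open subset of $\Spec(A)$; intersecting with the closed subset $\Spec(B)\hookrightarrow\Spec(A)$ cut out by $J$ shows that the locus is also Zariski open in $\Spec(B)$. If one wishes to transport the conclusion back to the ring $B$ itself rather than to its top local cohomology, one uses in addition that $B$ is Cohen-Macaulay, so that for each $\p$ the only nonvanishing local cohomology of $B_\p$ sits in the top degree, together with the compatibility of local cohomology and of the natural Frobenius action with $\m$-adic completion; this makes $B_\p$ being CMFI equivalent to injectivity of the natural Frobenius on $\HH^{d_{(\p)}}_{\p \widehat{B}_\p}(\widehat{B}_\p)$, so that $\CMFI(B)$ again coincides with $\mathcal{B}_1$.

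There is no real obstacle here beyond this bookkeeping: the substantive content — the $\HSL$-formula of Theorem \ref{index_Hs}, the openness of the strata $\mathcal{G}_i$ from Theorem \ref{G_izariski}, and the finiteness of the support appearing in the proof of Theorem \ref{main_thm} — is already in place. The only point deserving a word of care is that the global objects $\Omega$ and $\mathcal{U}_{(e)}$ specialise correctly at each prime, i.e. that $I_1(u\Omega)\otimes_A\widehat{A}_\p$ computes $I_1(u\widehat{\Omega}_\p)$ and similarly for $I_2$; but this is exactly what Proposition \ref{I_eCommutes} provides, so the corollary requires no new input.
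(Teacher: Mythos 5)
Your proposal is correct and follows essentially the same route as the paper: read off the corollary from Theorem \ref{main_thm} by the elementary observation that $F$-injectivity of the natural Frobenius on the top local cohomology is exactly $\HSL=0$, i.e.\ $\HSL<1$, so the $F$-injective locus is $\mathcal{B}_1$. One small point worth noting: the paper's printed proof reads ``the special case of Theorem \ref{main_thm} when $e=0$,'' which is an off-by-one slip — $\mathcal{B}_0$ is vacuously empty — and your $e=1$ is the value the paper itself identifies in the introduction, so your version actually corrects a typo rather than deviating from the argument.
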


%%%%%%%%%%%%%%%%%%%%%%%%%%%%%%%%%%%%%%%%%%%%%%%%%%%%%
%%%%%%%%%%%%%%%%%%%%%%%%%%%%%%%%%%%%%%%%%%%%%%%%%%%%%
%%%%%%%%%%%%%%%%%%%%%%%%%%%%%%%%%%%%%%%%%%%%%%%%%%%%%
%%%%%%%%%%%%%%%%%%%%%%%%%%%%%%%%%%%%%%%%%%%%%%%%%%%%%
%%%%%%%%%%%%%%%%%%%%%%%%%%%%%%%%%%%%%%%%%%%%%%%%%%%%%
%%%%%%%%%%%%%%%%%%%%%%%%%%%%%%%%%%%%%
\section{The Computation of the $\HSL$ Loci}\label{section:1stAlgorithm}
In the case where a ring $S=R/I$ of positive characteristic $p$ is a Cohen-Macaulay domain we have an explicit algorithm to compute the F-injective locus $\mathcal{B}_e$ of $S$  %(cf. Theorem \ref{main_thm} for definition) 
for every positive integer $e$. \\
Using the same notation as in Theorem \ref{main_thm} we have that $\mathcal{B}_e=\bigcup_{i=1}^s (\Supp(\Omega_{i,e})^C \cap \mathcal{G}_i)$. Because $\Supp(\Omega_{i,e})=V(\Ann_R(\Omega_{i,e}))$  and the sets $\mathcal G_i$ are of the form $V(K_i)^c$ for some ideals $K_1,\cdots,K_s\subset R$, we can then write $\mathcal{B}_e$  as
\begin{equation*}
\begin{split}
&\bigcup_i V(\Ann_R(\Omega_{i,e}))^c\cap V(K_i)^c=\\
&\bigcup_i \left(V(\Ann_R(\Omega_{i,e}))\cup V(K_i)\right)^c=\\
&\bigcup_i V(\Ann_R(\Omega_{i,e}) K_i )^c=\\
&\left( \bigcap_i V(\Ann_R(\Omega_{i,e})K_i)\right)^c=\\
&\left[V\left(\sum_i \Ann_R(\Omega_{i,e})K_i\right)\right]^c.
\end{split}
\end{equation*}
Therefore, given a positive integer $e$ and a Cohen-Macaulay domain $S$, an algorithm to find the locus $\mathcal{B}_e$ can be described as follows.
\begin{enumerate}
\item Compute a canonical module for $S$, then find an ideal $\Omega\subseteq S$ which is isomorphic to it.
\item Find the $R$-module of the Frobenius maps on $\Hs$ defined in (\ref{U_e}) as $$\U_{(e)}=\frac{\left(I^{[p^e]}:I\right)\cap\left(\Omega^{[p^e]}:\Omega\right)}{I^{[p^e]}}$$
as the cokernel of a matrix $X\in \Mat$. 
\item Find the generators $u_1,\cdots,u_s$ of $\U_{(e)}$.
\item Compute the ideals $K_i$'s of the $(s-1)\times(s-1)$-minors of $X$.

\item  For every generator $u_i$, compute the ideal $\Omega_{i,e}=I_e(u_i^{\nu_e}\Omega )/I_{e+1}(u_i^{\nu_{e+1}}\Omega )$. 
\item Compute $\mathcal{B}_e$ as $\left[V\left(\sum_i \Ann_R(\Omega_{i,e})K_i\right)\right]^c$.
\end{enumerate}
We now make use of the algorithm above to compute the loci in an example. The algorithm has been implemented in Macaulay2.
\begin{esem}
Let $R$ be the polynomial ring $\Z_2[x_1,\cdots,x_5]$ and let $I$ be the ideal $I=(x_2^2+x_1x_3,x_1x_2x_4^2+x_3^3x_5,x_1^2x_4^2+x_2x_3^2x_5)$. The quotient ring $S=R/I$ is a domain because $I$ is prime and it is Cohen-Macaulay of type 2 so it is not Gorenstein. A canonical module for $S$ is given by $\Ext^{\dim R -\dim S}(S,R)$ and can be produced as the cokernel of the matrix 
\[ \left( \begin{array}{ccccccccccccccc}
x_2   & x_1  & x_3^2x_5 \\    
x_3   & x_2  & x_1x_4^2 
\end{array} \right)\] 
which is isomorphic to an ideal $\Omega$ which is the image in $S$ of the ideal  $(x_2,x_1,x_2^2+x_1x_3,x_1x_2x_4^2+x_3^3x_5,x_1^2x_4^2+x_2x_3^2x_5)$ in $R$.\\
The $R$-module $\U_{(e)}$ of the Frobenius maps on $\Hs$ turns out to be given by the cokernel of the one-row matrix 
\[ X=\left( \begin{array}{ccccccccccccccc}
x_2^2+x_1x_3   & x_1x_2x_4^2+x_3^3x_5  & x_1^2x_4^2+x_2x_3^2x_5 
\end{array} \right)\] 
whose generator is $u=x_1^2x_2^2x_4^2+x_1^3x_3x_4^2+x_2^3x_3^2x_5+x_1x_2x_3^3x_5$.
Since $X$ has only one row then the computation of $\mathcal{B}_e$ reduces to $\mathcal{B}_e=\big[V(\Ann_R(\Omega_{e})\big]^c$. It turns out that 
$I_1(u^{\nu_1}\Omega)=(x_1x_4,x_2x_3,x_1x_3,x_3^3x_5,x_2^2+x_1x_3,x_1x_2x_4^2+x_3^3x_5,
x_1^2x_4^2+ x_2x_3^2x_5)$, $I_2(u^{\nu_2}\Omega)=(x_1x_4,x_2x_3,x_1x_2x_4,x_1^2x_4,x_3^3x_5,x_2^2+x_1x_3,
x_1x_2x_4^2+x_3^3x_5,x_1^2x_4^2+ x_2x_3^2x_5)=I_3(u^{\nu_3}\Omega)$. Consequently, being $\Omega_{0}=\frac{\Omega}{I_{1}(u^{\nu_{1}}(\Omega))}$ and
$\Omega_{1}=\frac{I_1(u^{\nu_1}(\Omega))}{I_{2}(u^{\nu_{2}}(\Omega))}$, we have $\mathcal{B}_0=V(x_3,x_2x_4,x_1x_4,x_2^2)^c = V(x_1,x_2,x_3)^c\cup V(x_2,x_3,x_4)^c$, 
$\mathcal{B}_1=V(x_1,x_2,x_3)^c$ and 
$\mathcal{B}_e=V(1)^c$ for every $e>1$. 
In other words, the $\HSL$-number can be at the most 2. More precisely, if we localise $S$ at a prime that does not contain the prime ideal $(x_3,x_2x_4,x_1x_4,x_2^2)$ then we get an $F$-injective module. Outside $(x_1,x_2,x_3)$ the $\HSL$-number is less or equal to 1; On $V(x_1,x_2,x_3)$ the $\HSL$ number is exactly 2.
\end{esem}

%%%%%%%%%%%%%%%%%%%%%%%%%%%%%%%%%%%%%%%%%%%%%%%%%%%%%
%%%%%%%%%%%%%%%%%%%%%%%%%%%%%%%%%%%%%%%%%%%%%%%%%%%%%
%%%%%%%%%%%%%%%%%%%%%%%%%%%%%%%%%%%%%%%%%%%%%%%%%%%%%
%%%%%%%%%%%%%%%%%%%%%%%%%%%%%%%%%%%%%%%%%%%%%%%%%%%%%
%%%%%%%%%%%%%%%%%%%%%%%%%%%%%%%%%%%%%%%%%%%%%%%%%%%%%
%%%%%%%%%%%%%%%%%%%%%%%%%%%%%%%%%%%%%

\section{Test Exponents for Frobenius Closures and $\HSL$ Numbers} \label{Section:application}

Let $S$ be a ring of characteristic $p$ and $J\subseteq S$ an ideal.
\begin{defi}
The \emph{Frobenius closure} of $J$ is the ideal
$$J^F=\left\{a\in S \ \vert \  a^{p^e}\in J^{[p^e]} \text{ for some } e>0  \right\}.$$
\end{defi}
Note that if $a^{p^{\bar e}}\in J^{[p^{\bar e}]} $ then $a^{p^{e}}\in J^{[p^{e}]} $ for every $e>\bar e. \\
$Let $g_1,\cdots g_n$ be a set of generators for $J^F$. For each generator $g_i$ let $e_i$ be the integer such that $g_i^{p^{e_i}}\in J^{p^{e_i}}$. If we then choose $\bar e=  \max \{e_1,\cdots,e_n\}$ then $(J^F)^{[p^{\bar e}]}\subseteq J^{[p^{\bar e}]}$. We say that $\bar e$ is a \emph{test exponent for the Frobenius closure of} $J$.\\
With the notation introduced in Section \ref{section:introduction}, we have the following.
\begin{teo}\cite[Theorem 2.5]{8}\label{testExp}
Let $(S,\m)$ be a local, Cohen-Macaulay ring and let $\underbar x=x_1,\cdots,x_d$ be a system of parameters.  Then the test exponent for the ideal $(\underbar x)$ is $\bar e= \HSL(\Hs)$.   
\end{teo}

\begin{proof}
$$\Hs=\lim_{\overset{\longrightarrow}{t}} \left( \frac{S}{\underbar x}
\overset{x_1\cdots x_n}{\longrightarrow} \cdots \overset{x_1\cdots x_n}{\longrightarrow} \
\frac{S}{\underbar{x}^t} \overset{x_1\cdots x_n}{\longrightarrow} \cdots \right)$$
has a natural Frobenius action $T$ which we can define on a generic element of the direct limit as 
$$T[a+\underbar x^t]=a^p+\underbar x^{pt}.$$ Therefore $a^{p^e} \in \underbar x^{p^e}$ if and only if $T^e[a+\underbar x^t]=0$ i.e. $[a+\underbar x^t]$ is nilpotent and we can take $\bar e=\HSL(\Hs)$.
\end{proof}
\begin{corol}\label{testexponent}
Let $S$ be the quotient of a polynomial ring and let $\epsilon$ be the bound for $\{\HSL(\HH^{\text{dim} S}_\m(S)) \ \vert \ \m \text{ is maximal}\}$. If $J\subseteq S$ is locally a parameter ideal (i.e. for every maximal ideal  $\m\supseteq J$, $J_\m$ is a parameter ideal) then $(J^F)^{[p^\epsilon]}=J^{p^\epsilon}$.   
\end{corol}

%%%%%%%%%%%%%%%%%%%%%%%%%%%%%%%%%%%%%%%%%%%%%%%%%%%%%
%%%%%%%%%%%%%%%%%%%%%%%%%%%%%%%%%%%%%%%%%%%%%%%%%%%%%
%%%%%%%%%%%%%%%%%%%%%%%%%%%%%%%%%%%%%%%%%%%%%%%%%%%%%
%%%%%%%%%%%%%%%%%%%%%%%%%%%%%%%%%%%%%%%%%%%%%%%%%%%%%
%%%%%%%%%%%%%%%%%%%%%%%%%%%%%%%%%%%%%

%%%%%%%%%%%%%%%%%%%%%%%%%%%%%%%%%%%%%%%%%%%%%%%%%%%%%
%%%%%%%%%%%%%%%%%%%%%%%%%%%%%%%%%%%%%%%%%%%%%%%%%%%%%
%%%%%%%%%%%%%%%%%%%%%%%%%%%%%%%%%%%%%%%%%%%%%%%%%%%%%
%%%%%%%%%%%%%%%%%%%%%%%%%%%%%%%%%%%%%%%%%%%%%%%%%%%%%
%%%%%%%%%%%%%%%%%%%%%%%%%%%%%%%%%%%%%%%%%%%%%%%%%%%%%
%%%%%%%%%%%%%%%%%%%%%%%%%%%%%%%%%%%%

\section*{Acknowledgements}
The author is very grateful to her supervisor Moty Katzman for the helpful discussions about the problem.

%%%%%%%%%%%%%%%%%%%%%%%%%%%%%%%%%%%%%%%%%%%%%%%%%%%%%
%%%%%%%%%%%%%%%%%%%%%%%%%%%%%%%%%%%%%%%%%%%%%%%%%%%%%
%%%%%%%%%%%%%%%%%%%%%%%%%%%%%%%%%%%%%%%%%%%%%%%%%%%%%
%%%%%%%%%%%%%%%%%%%%%%%%%%%%%%%%%%%%%%%%%%%%%%%%%%%%%
%%%%%%%%%%%%%%%%%%%%%%%%%%%%%%%%%%%%%%%%%%%%%%%%%%%%%
%%%%%%%%%%%%%%%%%%%%%%%%%%%%%%%%%%%%%%%%%%%%%%%%%%%%%

\end{document}